\documentclass[final]{dmtcs-episciences}

\usepackage[utf8]{inputenc}
\usepackage{amsmath}
\usepackage{color}
\usepackage{xypic}
\usepackage{graphicx}
\usepackage{subfigure}
\usepackage{float}
\usepackage{ifpdf}
\usepackage{url}
\usepackage{enumerate}
\usepackage{xspace}
\usepackage{multirow}
\usepackage{hhline}
\usepackage{microtype}
\usepackage{lmodern}
\usepackage{tabularx}

\makeatletter
\newcommand{\miniscule}{\@setfontsize\miniscule{4}{5}}
\makeatother

\usepackage{tikz}
\usetikzlibrary{calc,decorations.markings}

\tikzset{
unnumbered vertex/.style={draw, circle, fill=white, very thick, inner sep=1pt},
normal vertex/.style={draw, circle, fill=white, very thick, inner sep=0.5pt, font=\miniscule, minimum size=7pt},
normal edge/.style={very thick},
polygonal schema/.style={dashed, thin},
polygonal schema left/.style={polygonal schema, decoration={
	markings,
	mark=at position 0.5 with {
		\fill[solid, thin, black] (0.2,-0.2pt) -- (-0.2,0.2) -- (-0.2,-0.2pt) -- cycle;
	}
}, postaction={decorate}},
polygonal schema right/.style={polygonal schema, decoration={
	markings,
	mark=at position 0.5 with {
		\fill[solid, thin, black] (0.2,0.2pt) -- (-0.2,-0.2) -- (-0.2,0.2pt) -- cycle;
	}
}, postaction={decorate}}
}

\newcommand{\nx}{\ensuremath{\mathrm{nx}}\xspace}
\newcommand{\Mod}[1]{\ (\mathrm{mod}\ #1)}
\newcommand{\gen}{\ensuremath{\mathrm{mingen}}}

\newtheorem{theorem}{Theorem}
\newtheorem{lemma}[theorem]{Lemma}
\newtheorem{proposition}[theorem]{Proposition}
\newtheorem{corollary}[theorem]{Corollary}

\newtheorem{note}{Note}

\newtheorem{definition}{Definition}

\numberwithin{equation}{section}
\usepackage[round]{natbib}

\author{Gunnar Brinkmann\affiliationmark{1}
\and Thomas Tucker\affiliationmark{2}
\and Nico Van Cleemput\affiliationmark{1}  }

\title[On the genera of polyhedral embeddings of cubic graphs]{On the genera of polyhedral embeddings of cubic graphs.}

\affiliation{Ghent University, Belgium \\ Colgate University, USA}

\keywords{polyhedron, graph, map, surface}

\received{2020-08-24}

\revised{2021-04-07}

\accepted{2021-10-06}

\begin{document}

\publicationdetails{23}{2021}{3}{13}{6729}
\maketitle
\begin{abstract}
  In this article we present theoretical and computational results on the existence of polyhedral embeddings of graphs. The emphasis is on cubic graphs. We also
  describe an efficient algorithm to compute all polyhedral embeddings of a given cubic graph and constructions for cubic graphs with some special properties of
  their polyhedral embeddings.  Some key results are that even cubic graphs with a polyhedral embedding on the torus can also have polyhedral embeddings
  in arbitrarily high genus, in fact in a genus {\em close} to the theoretical maximum for that number of vertices, and that there is no bound on the number of genera in
  which a cubic graph can have a polyhedral embedding. While these results suggest a large variety of polyhedral embeddings, computations for up to 28 vertices suggest that
  by far most of the cubic graphs do not have a polyhedral embedding in any genus and that the ratio  of these graphs is increasing with the number of vertices.
\end{abstract}


\section{Introduction}

Three-dimensional convex polyhedra, or short polyhedra,
as geometric objects have already been a subject of mathematical research in ancient Greece and played an important role ever since. Since Steinitz's theorem
characterizing the graph formed by the edges of polyhedra as 3-connected plane graphs, these graphs -- also referred to as polyhedra --
are also studied as combinatorial objects. While the geometric characterization cannot directly be generalized to graphs on surfaces of higher genus, some
key properties of polyhedra can be used to generalize the concept to {\em polyhedral embeddings} of higher genus:

\begin{definition}
A polyhedral embedding of a graph $G=(V,E)$ in an orientable surface is an embedding so that
each facial walk is a simple cycle and an intersection of two faces is either empty,
a single vertex, or a single edge.
\end{definition}

An equivalent definition \cite{graphs_on_surfaces} is that $G$ is 3-connected and that the embedding has facewidth at
least 3 (with the facewidth of a plane graph defined as infinite). For cubic embedded graphs, this is equivalent to the dual being simple -- that is without multiedges or loops.
Like in most articles, in this article the word {\em graph} refers to simple graphs, but in some cases, like e.g.\ when talking about the dual, where the result of the operation might be a
multigraph, we emphasize the requirement of being simple.

Polyhedral embeddings share many properties of polyhedra.  For example, the dual of a polyhedral embedding is a polyhedral embedding \cite{Mohar1997}.
Furthermore, local
modifications of an embedding preserving symmetry,
such as next to dual also e.g.\  truncation, ambo, and many others preserve polyhedrality of an embedding \cite{Heidi_masterthesis}.  Finally, a generalization
of Whitney's theorem that polyhedra have a unique embedding in the plane, shows that for each genus $g$, there is a number
$\zeta (2g)$ such that no graph can have
more than $\zeta (2g)$ polyhedral embeddings in the orientable surface of genus $g$ \cite{num_embed_bound}.

Cubic graphs are an especially interesting class of graphs, as for many more general conjectures it has been shown that it is sufficient to prove them for cubic
graphs.  Furthermore the dual of a cubic polyhedrally embedded graph is a simple triangulation.  Triangulations of surfaces have a long history in topology and
combinatorics: the Hauptvermutung of the 1920s, the Ringel-Youngs Theorem of the 1960s, or e.g \cite{Malnic, Negami}.  On the other hand, most of that work does
not analyze the structure of the cubic dual embedding.

We deal with {\em combinatorial embeddings} in closed oriented surfaces. We will first describe the basic concepts and notations.
For further results and definitions, we refer the reader to \cite{top_graph_theory}. For vertices $x,y$ of an embedded graph $G$, an edge $\{x,y\}$
is interpreted as
two oppositely directed edges -- the edge $(x,y)$ directed from $x$ to $y$ and the edge $(y,x)$ directed from $y$ to $x$.
We refer to $(y,x)$ as the {\em inverse} edge of $(x,y)$ and write $e^{-1}$ for the
inverse of the directed edge $e$.
At each vertex we have a rotational order of the directed
edges starting at that vertex, that we interpret in figures as clockwise. Such an assignment of cyclic orders is often called a \textit{rotation system}. The {\em mirror image} of a combinatorial embedding assigns the reverse cyclic order for each vertex and represents the same topological embedding in the oppositely oriented surface.
An isomorphism between two embedded graphs is a graph isomorphism that also respects
the rotational order at each vertex or reverses the rotational order at each vertex, in which case we call the 
isomorphism {\em orientation reversing}.
When we talk about the {\em next}
(directed) edge of a directed edge $e$ (notation: $\nx(e)$), this refers to the next edge in rotational order around the start vertex of
$e$. A face is a cyclic sequence $e_0, \dots ,e_{k-1}$ of directed edges, so that for $0\le i <k$ we have
$\nx(e_i^{-1})=e_{(i+1) \Mod{k}}$. In figures this corresponds to a counterclockwise traversal of the edges.
We say that the set $\{e, \nx(e)\}$ of edges $e, \nx(e)$ forms an {\em angle}.
In this case $e^{-1}, \nx(e)$ follow each other
in a face of $G$ and $\nx(e)^{-1},e$ follow each other in a face of the mirror image.

If we have a directed simple cycle $C=(e_0, \dots ,e_{k-1})$ with
$e_i=(v_i,v_{(i+1) \Mod{k}})$ and a
directed edge $e=(v_{i+1},x)$ with
$e\not\in \{e_i^{-1},e_{(i+1) \Mod{k}}\}$,
then we say that $e$ is right of $C$ if in the rotational order after $e_{(i+1) \Mod{k}}$, the edge $e$
comes before $e_i^{-1}$. Otherwise we say that $e$ is left of $C$. An undirected cycle $C$ corresponds
to two directed cycles. If one of them is a face, we also call $C$ a facial cycle.
Let $v_i,v_{i+1},\dots ,v_k$
be a maximal subsequence of vertices of a directed cycle $C$ that all have no edges on the left (right) of $C$. If this sequence
contains all vertices, then $C$ ($C^{-1}$) is a face. Otherwise (assume w.l.o.g. $i>0$) we call
$(e_{i-1},e_i,\dots ,e_{k})$ a left (right) facial subpath of $C$.
In fact in that case $(e_{i-1},e_i,\dots ,e_{k})$, resp. $(e^{-1}_{k},\dots ,e^{-1}_{i-1})$ is part of a face.
This concept is visualized in Figure~\ref{fig:subpath}.

\begin{figure}[h!t]
	\centering
	\includegraphics[width=0.3\textwidth]{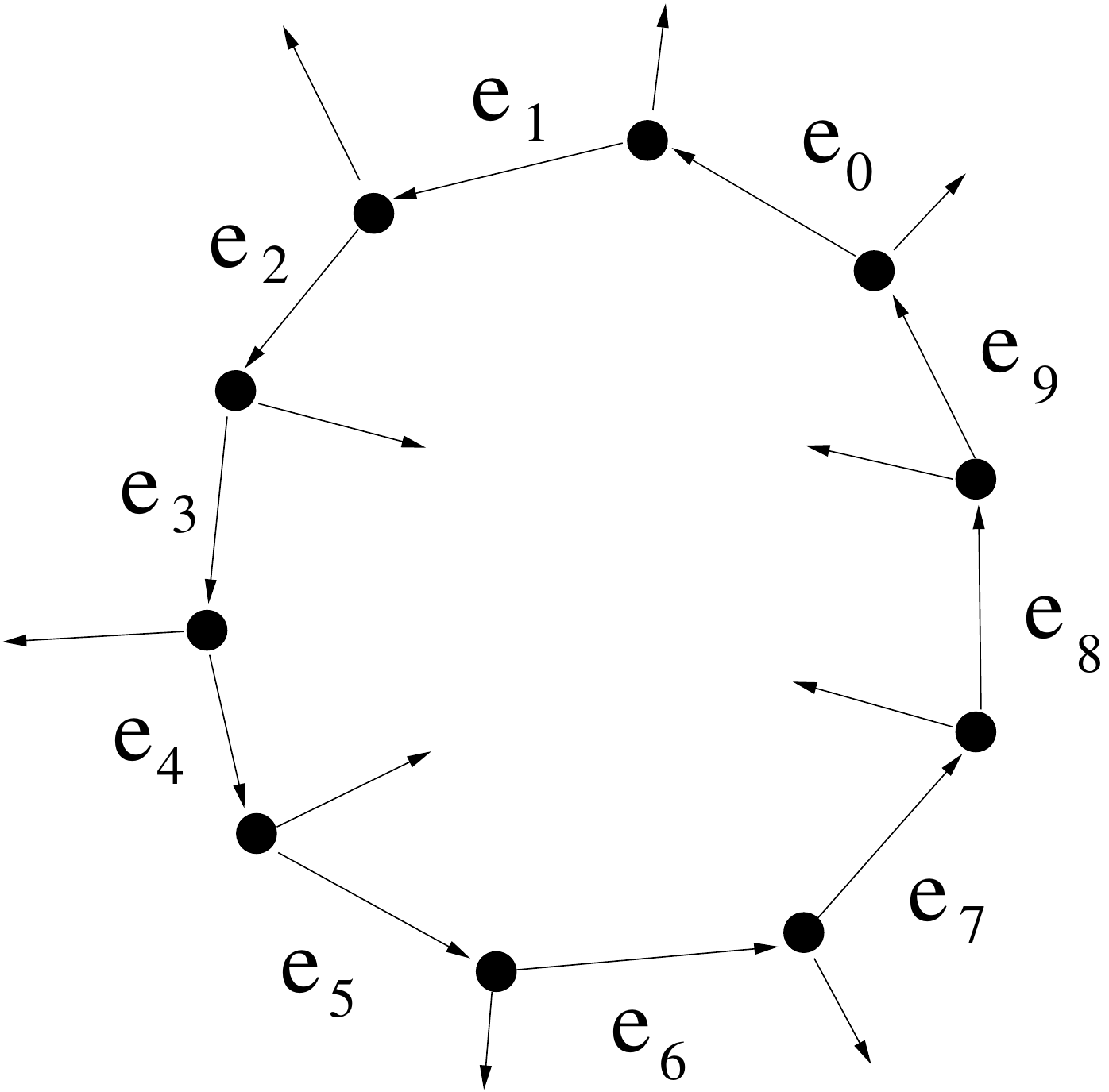}
	\caption{A directed cycle as subgraph in a cubic embedded graph with left facial subpaths $(e_9,e_0,e_1,e_2)$,
        $(e_3,e_4)$, $(e_5,e_6,e_7)$, and right facial subpaths $(e_7,e_8,e_9)$, $(e_2,e_3)$, and $(e_4,e_5)$.}
	\label{fig:subpath}
\end{figure}

Note that in a cubic graph the number of left and right facial subpaths of a directed cycle is always the same and that the first and last edge
of a left facial subpath always occur in right facial subpaths too (and the other way around). These are
the edges with one endpoint with a left edge and one with a right edge. A cycle $C$ in a graph $G$ is called {\em induced} if all edges of $G$
with both endpoints on $C$ are also edges of $C$.

In the following we are especially interested in the genus of polyhedral embeddings.
For an embedded graph, the genus is defined as the genus of the surface on which it is embedded.
If $G$ is an abstract, not embedded, graph, then $\gen(G)$ is the smallest genus of a surface on which $G$ can be embedded.
A genus embedding of a graph is an embedding in the smallest possible genus.
Of course polyhedra -- that is 3-connected plane graphs --
all have a polyhedral embedding in the plane.

Due to Whitney's famous theorem, plane embeddings of 3-connected
planar graphs are -- up to mirror images -- unique. In fact this is true
in the more general setting of polyhedral embeddings, as due to Theorem~8.1 in
\cite{Thomassen_90} stating that embeddings of 3-connected planar graphs in genus at least 1 have facewidth at most 2,
and therefore cannot be polyhedral embeddings.

\begin{theorem}\label{thm:polyhedra}
  Let G be a 3-connected planar graph.  Then the embedding in the plane is polyhedral and it is (up to mirror images)
  the unique plane embedding and also
  the unique polyhedral embedding.

  This implies that 
  there are no polyhedral embeddings of planar graphs, but genus embeddings of 3-connected planar graphs in the plane.
\end{theorem}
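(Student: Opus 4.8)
The plan is to prove the three assertions — that the plane embedding is polyhedral, that it is unique among plane embeddings, and that it is unique among all polyhedral embeddings — by combining the equivalent characterization of polyhedral embeddings recalled above with Whitney's theorem and Theorem~8.1 of \cite{Thomassen_90}. The statement is essentially a synthesis of these two classical results together with the convention that the facewidth of a plane graph is infinite, so the real work lies in organizing a clean case distinction by genus rather than in any hard estimate.

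First I would record that the plane embedding is polyhedral. By the equivalent definition (see \cite{graphs_on_surfaces}), an embedding is polyhedral precisely when the underlying graph is $3$-connected and the embedding has facewidth at least $3$. The graph $G$ is $3$-connected by hypothesis, and by our convention the facewidth of any plane embedding is infinite, hence in particular at least $3$. Therefore the plane embedding of $G$ is polyhedral. Uniqueness among plane embeddings is then exactly Whitney's theorem: a $3$-connected planar graph has, up to mirror image, a single embedding in the plane.

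For uniqueness among \emph{all} polyhedral embeddings I would argue by the genus of an arbitrary orientable embedding $\Pi$ of $G$. If $\Pi$ has genus $0$ it is a plane embedding, and is therefore the unique one up to mirror image by the previous step. If $\Pi$ has genus at least $1$, then Theorem~8.1 of \cite{Thomassen_90} guarantees that $\Pi$ has facewidth at most $2$; since a polyhedral embedding requires facewidth at least $3$, such a $\Pi$ cannot be polyhedral. Consequently the only polyhedral embeddings of $G$ are the two mutually mirror plane embeddings, which proves uniqueness up to mirror image.

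Finally, the concluding implication is immediate: since a planar graph has $\gen(G)=0$, its plane embedding is a genus embedding, and the argument above shows that genus $0$ is the only genus in which a $3$-connected planar graph admits a polyhedral embedding. Thus such graphs have no polyhedral embeddings outside the plane, and their unique polyhedral embedding coincides with a genus embedding in the plane. The only point requiring care — and the closest thing to an obstacle — is the bookkeeping of mirror images together with the verification that Theorem~8.1 of \cite{Thomassen_90} applies to every orientable embedding of genus at least $1$; once that is in hand, no further computation is needed.
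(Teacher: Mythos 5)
Your proposal is correct and follows essentially the same route as the paper: the plane embedding is polyhedral via the facewidth characterization, uniqueness among plane embeddings is Whitney's theorem, and uniqueness among all polyhedral embeddings follows from Theorem~8.1 of \cite{Thomassen_90}, which forces facewidth at most $2$ in genus at least $1$. The paper presents exactly this argument (in the paragraph preceding the theorem statement), so there is nothing to add.
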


In Section~\ref{sec:further} we will show that already for cubic graphs $G$ with $\gen(G)=1$ even 
polyhedral embeddings with a genus close to the theoretical maximum for the
number of vertices exist and that there
are even cubic graphs that do have polyhedral embeddings, but not in minimum
genus.

\section{Basic results}

\begin{lemma}\label{lem:3out}
  Let $P_G$ be a polyhedral embedding of a graph $G$ and $C=(v_1,v_2,\dots ,v_k)$ a simple induced
  directed cycle of $G$.
Then $C$ ($C^{-1}$) either is a face or there are at least 3 vertices in $C$ that are starting points of an edge 
left (right) of $C$.
If $G$ is cubic and neither $C$ nor $C^{-1}$ is a face, then $C$ has at least 2 left and at least 2 right facial subpaths.
\end{lemma}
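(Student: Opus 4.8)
The plan is to reduce both assertions to the two defining features of a polyhedral embedding: every facial walk is a simple cycle, and any two distinct faces meet in at most a single vertex or a single edge. Orient $C=(e_0,\dots,e_{k-1})$ with $e_i=(v_i,v_{i+1})$, and for each directed cycle edge let $F_i$ be the face to its left, i.e.\ the unique face containing $e_i$. Tracing a face shows $\nx(e_i^{-1})=e_{i+1}$, and hence $F_i=F_{i+1}$, exactly when $v_{i+1}$ has no edge on the left of $C$; so the left faces stay constant along a maximal run of vertices with no left edge and change precisely at the vertices that do start a left edge. Consequently each left facial subpath is a contiguous arc of a single face, and the number of left facial subpaths equals the number of maximal runs of vertices with no left edge (equivalently, when both kinds occur, the number of runs of the set $L$ of left-edge vertices). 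The statement thus becomes: part~1 asks for $|L|\ge 3$ whenever $C$ is not a face, and part~2 (cubic) asks that $L$ and its complement each split into at least two runs.

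For part~1 I would assume $C$ is not a face, so $L\neq\emptyset$, and rule out $|L|\le 2$. If $|L|=1$, there is a single left face containing all $k$ cycle edges together with at least one detour edge forced at the unique left-edge vertex; such a walk runs through the whole cycle plus an extra edge, so it revisits that vertex and cannot be a simple facial cycle. If $|L|=2$ with the two special vertices $v_m,v_n$ \emph{non-adjacent} on $C$, the two incident left faces either coincide, giving again a non-simple facial walk, or are distinct, in which case both pass through $v_m$ and $v_n$, so their intersection holds two vertices while the induced hypothesis forbids an edge $v_mv_n$; this violates the face-intersection condition. The subcase of two \emph{adjacent} left-edge vertices is the fiddly one: here I would compare the single large left face $F$, which omits exactly the cycle edge $e$ joining the two special vertices, with the left face $F'$ of that omitted edge. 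Since $e\in F'$ but $e\notin F$ the faces are distinct, yet both contain the two endpoints of $e$, so again they meet in two vertices with no joining edge in the intersection. Applying everything to $C^{-1}$ yields the symmetric statement about right edges.

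For part~2 assume $G$ cubic and that neither $C$ nor $C^{-1}$ is a face. Each vertex now has exactly one non-cycle edge, so it is of left type or of right type, and the number of left facial subpaths, the number of right facial subpaths, and the number of maximal runs of equal-type vertices all equal a common value $r$; by part~1 there are at least three vertices of each type. It remains to exclude $r=1$, where all left-type vertices form one arc $v_a,\dots,v_b$ and all right-type vertices the complementary arc. Then one face $F$ carries the unique left facial subpath and one face $F'$ carries the unique right one, and a direct index computation shows that $F$ and $F'$ share exactly the two boundary cycle edges $e_b=\{v_b,v_{b+1}\}$ and $e_{a-1}=\{v_{a-1},v_a\}$; because each type has at least three vertices these two edges are distinct and vertex-disjoint, so $F\cap F'$ contains two disjoint edges, contradicting the face-intersection condition. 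That $F\neq F'$ follows because a polyhedral, hence bridgeless, embedding places a directed edge and its reverse in different faces. Hence $r\ge 2$.

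I expect the main obstacle to be the bookkeeping in the degenerate configurations — the adjacent $|L|=2$ case in part~1 and the $r=1$ case in part~2 — where the contradiction comes not from a non-simple facial walk but from two genuinely distinct faces sharing too much of $C$. Pinning down the indices of the boundary edges, and verifying that the shared vertices or edges really are forced into both faces (and that the two faces are distinct), is where the argument needs the most care.
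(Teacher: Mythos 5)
Your proof is correct and follows essentially the same route as the paper: the left-edge vertices of $C$ cut it into segments each lying in a single left face, the cases of one or two such vertices are excluded by exhibiting either a non-simple facial walk or two faces meeting in two vertices that do not form a common edge, and in the cubic case the unique left and unique right facial subpaths share their two end edges, contradicting polyhedrality. You are in fact more careful than the paper in the subcase where the two left-edge vertices are adjacent on $C$ (the paper's phrase ``not part of the same edge as $C$ is induced'' does not literally cover that configuration, whereas your comparison of the large face $F$ with the face $F'$ of the omitted cycle edge does). One small repair is needed in part~2: the claim that a ``bridgeless embedding places a directed edge and its reverse in different faces'' is false in general (a one-face embedding of $K_5$ in genus $3$ puts every edge twice on its single face); the correct and immediate reason that $F\neq F'$ is that in a polyhedral embedding every facial walk is a simple cycle and therefore cannot contain both $e_{a-1}$ and $e_{a-1}^{-1}$ --- which is exactly how the paper dispatches the case where the two subpaths lie in the same face.
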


\begin{proof} 

  Vertices with edges left (right) of $C$ separate $C$ into segments between two such vertices
  that belong to the same face. If $C$ $(C^{-1})$ is not a face and we have one vertex with edges to the left (right), then we have one segment and that
would be part of a face with a facial walk that contains this vertex twice and is therefore
not simple. If we have two such vertices, we have two segments.
If they belong to different faces, we have two faces with the intersection containing these two vertices, which
are not part of the same edge as $C$ is induced. Otherwise we have one cycle where these two vertices occur multiple times
-- so the facial walk would not be simple.

If $G$ is cubic and $C$ had exactly one left facial subpath, it also had one right facial subpath and these paths would share
two edges, which is a contradiction to the embedding being polyhedral: if the subpaths are in the same face of the embedding, the face boundary would not be a cycle as two oppositely directed edges
are contained
and otherwise the two faces would share at least two edges.
\end{proof}

A direct consequence of this lemma is the following corollary, which also contains the results from Lemma~2.1, Lemma~2.2
and Lemma~2.3(a) from \cite{movo2006}:

\begin{corollary}\label{cor:smallfix}
Let $P_G$ be a polyhedral embedding of a cubic graph $G$ and $C$ be an undirected cycle that is a 3-cycle, a 4-cycle, or an induced 5-cycle
of $G$. Then -- except if $G=K_4$ and $C$ is a 4-cycle -- $C$ is a facial cycle.

If $C$ is an induced undirected 6-cycle, then $C$ is either a facial cycle or it has 3 edges to each side. Furthermore for both sets of three edges to one side, the starting vertices of the edges on $C$ do not form a path of length 2.
\end{corollary}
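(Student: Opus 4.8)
The unifying idea is that all the cycles in question are \emph{induced} cycles of the cubic graph: a $3$-cycle is automatically induced, the $5$- and $6$-cycles are assumed induced, and a $4$-cycle is induced exactly when it has no chord. For an induced cycle Lemma~\ref{lem:3out} applies directly, and the cubic structure makes the bookkeeping trivial: every vertex of an induced $k$-cycle $C$ has exactly one incident edge leaving $C$, and by the definitions each such edge lies either left or right of $C$. Hence $C$ has exactly $k$ edges leaving it, split between the two sides, and if $C$ is not a facial cycle Lemma~\ref{lem:3out} forces at least $3$ of them on the left \emph{and} at least $3$ on the right, so $k\ge 6$. This single observation carries most of the statement.

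For the $3$-cycle, the induced $5$-cycle, and the chordless $4$-cycle we have $k\le 5<6$, so the non-facial case is impossible and $C$ must be facial. The only remaining case for the first assertion is a $4$-cycle $v_1v_2v_3v_4$ carrying a chord, say $v_1v_3$. Then $v_1$ and $v_3$ already have degree $3$ inside $\{v_1,\dots,v_4\}$, while $v_2,v_4$ each have one further neighbour; if $v_2$ and $v_4$ are adjacent, all six edges on these four vertices are present and $G=K_4$, so I assume otherwise and write $w_2,w_4$ for the outside neighbours. The chord splits $C$ into the triangles $v_1v_2v_3$ and $v_1v_3v_4$, which by the $3$-cycle case are both faces. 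The plan is now to track the remaining faces using that at a cubic vertex each face occupies exactly one of the three angles and therefore meets exactly two of the three incident edges. The third face $F_1$ at $v_1$ must meet $v_1v_2$ and $v_1v_4$; chasing it through $v_2$ and $v_4$ (where the faces on $v_1v_2$ and $v_1v_4$ other than the two triangles sit in the angles $\{v_2v_1,v_2w_2\}$ and $\{v_4v_1,v_4w_4\}$) shows it also meets $v_2w_2$ and $v_4w_4$; symmetrically the third face $F_3$ at $v_3$ meets $v_3v_2,v_3v_4$ and again $v_2w_2,v_4w_4$. A short angle count at $v_2$, where $T_1$ occupies $\{v_2v_1,v_2v_3\}$, shows $F_1\ne F_3$. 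Thus $F_1$ and $F_3$ are two distinct faces sharing the two distinct edges $v_2w_2$ and $v_4w_4$ (distinct since their cycle endpoints $v_2\ne v_4$), producing a double edge in the simple dual and contradicting polyhedrality. Hence a chorded $4$-cycle forces $G=K_4$, which is precisely the stated exception. I expect this chorded $4$-cycle analysis to be the main obstacle: one must upgrade ``the two triangles are faces'' into the concrete conclusion that two outer faces coincide on two edges, and the cleanest route is the angle bookkeeping at the cubic vertices rather than explicit rotation systems.

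For the induced $6$-cycle the counting of the first paragraph already gives the dichotomy: either $C$ is facial, or exactly $3$ of its edges go to each side. For the ``furthermore'' I would invoke the second half of Lemma~\ref{lem:3out}, which guarantees at least two left and at least two right facial subpaths whenever $C$ is non-facial. Since the number of left facial subpaths equals the number of maximal runs of vertices of $C$ having no edge to the left, if the three left-edge vertices were consecutive on $C$, i.e.\ formed a path of length $2$, the three remaining vertices would form a single such run, giving only one left facial subpath and contradicting the bound of two; the identical argument on the right completes the proof.
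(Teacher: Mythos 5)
Your proof is correct, and for all the induced cases it is exactly the argument the paper intends: the corollary is stated as a ``direct consequence'' of Lemma~\ref{lem:3out}, via the observation that in a cubic graph each vertex of an induced cycle contributes exactly one outgoing edge, so a non-facial induced cycle needs at least $3+3=6$ vertices; your treatment of the $6$-cycle via the ``at least two left and two right facial subpaths'' clause also matches the intended use of the lemma. The one place you genuinely diverge is the chorded $4$-cycle: the paper disposes of it with a one-line remark placed right after the corollary --- a non-induced $4$-cycle in a cubic graph on more than $4$ vertices forces a $2$-cut (the two cycle vertices not on the chord separate the other two from the rest), so the graph is not $3$-connected and has no polyhedral embedding in any surface, making the case vacuous. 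Your angle-chasing argument, producing two distinct faces $F_1$ and $F_3$ that share the two edges $v_2w_2$ and $v_4w_4$, is a valid alternative that works entirely inside the hypothetical embedding, but it is considerably more work than the connectivity observation and essentially re-derives locally why low connectivity obstructs polyhedrality.
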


Note that if a 4-cycle in a cubic graph with more than 4 vertices is not induced, the graph has a 2-cut and therefore no
polyhedral embedding in any surface.

\section{An algorithm to compute polyhedral embeddings of cubic graphs}\label{sec:alg}

Already in \cite{movo2006} a computer program was used to check the
existence of polyhedral embeddings of some cubic graphs. At that time
Gr{\"u}nbaum's conjecture that there are no cubic graphs with chromatic
index 4 that admit a polyhedral embedding in an orientable surface was
still open and it was used to check all {\em weak snarks} on up to 30
vertices for the existence of a polyhedral embedding.  We use the term
{\em weak snark} for cyclically 4-connected cubic graphs with
chromatic index 4 and -- as usual -- the term {\em snark} if in
addition they have girth 5.  In \cite{movo2006} a straightforward
algorithm was used that constructed all $2^{|V|-1}$ embeddings and tested them
for being polyhedral. This approach is of course only suitable for
relatively small lists of not too large graphs and would -- even on a very large cluster -- not be feasible for the
graphs tested in this article, so that we had to develop a faster program, that we will refer to as {\em poly\_embed}. In the meantime the conjecture has
been refuted by Kochol \cite{polembsnark}, but as for Kochol's
counterexample it is neither known, whether it is smallest possible,
nor whether it has smallest possible genus, we also tested the weak
snarks on up to 36 vertices, which have been constructed in the
meantime \cite{snarkapp} and are available at \cite{HoG}.  Furthermore
we tested large lists of cubic graphs that are not
snarks to find examples for the constructions in
Section~\ref{sec:further}.

For a cubic graph $G$ 
we define two vertices of $G$ as being related, if they are contained in a common 3-, 4- or 5-cycle
and begin by computing the equivalence classes of the transitive closure of this relation.

\begin{proposition}\label{note:equclass}

  If $v,w$ are vertices in the same equivalence class of a cubic graph
  $G$, then in any polyhedral embedding of $G$ the order of edges
  around $w$ is uniquely determined by the order around $v$.
  
\end{proposition}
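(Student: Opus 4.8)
The plan is to exploit the rigidity that a facial cycle imposes on the local rotations of a \emph{cubic} graph, together with Corollary~\ref{cor:smallfix}. Recall that in a cubic graph the ``order of edges around a vertex'' is just one of the two cyclic orders of its three edges. Since the equivalence classes are the transitive closure of the relation ``lying on a common $3$-, $4$-, or $5$-cycle'', it suffices to treat the case where $v$ and $w$ lie on a single such cycle $C$ and then propagate the deduction along a chain $v=u_0,u_1,\dots,u_m=w$ of consecutively related vertices. First I would invoke Corollary~\ref{cor:smallfix} to conclude that $C$ is a facial cycle, deferring the configurations it excludes to the final step.

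The heart of the argument is the following observation. Suppose $C$ is facial and orient it as a face $e_0,\dots,e_{k-1}$ with $e_i=(v_i,v_{i+1})$. The defining condition $\nx(e_i^{-1})=e_{i+1}$ forces, at each vertex $v_{i+1}$, that the edge $(v_{i+1},v_{i+2})$ immediately follows $(v_{i+1},v_i)$ in rotational order; since $v_{i+1}$ is cubic its third edge must occupy the single remaining slot, so the entire cyclic order around $v_{i+1}$ is pinned down once the facial direction of $C$ is fixed. Thus a facial cycle determines the rotation at \emph{every} one of its vertices, provided we know which of $C$, $C^{-1}$ is the face. But that is exactly what the rotation at one vertex supplies: among the three edges at $v$, the two cycle edges $(v,v_-)$ and $(v,v_+)$ are cyclically consecutive in precisely one sense (in any cyclic order of three elements, each pair is adjacent going one way and separated by the third element the other way), and which sense tells us whether the face runs $v_-\to v\to v_+$ or the reverse. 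Hence the rotation at $v$ fixes the facial direction of $C$ in the given embedding, which in turn fixes the rotation at $w$. Chaining this along $u_0,\dots,u_m$ proves the proposition.

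The main care is needed for the configurations excluded by Corollary~\ref{cor:smallfix}, and this is where I expect the real work to lie. If $G=K_4$ a $4$-cycle need not be facial, but every pair of vertices of $K_4$ also lies on a common triangle, and all four triangles are faces of the tetrahedral embedding, so triangle propagation still applies. A non-induced $5$-cycle is likewise not covered, but in a cubic graph a chord splits it into a triangle and a $4$-cycle whose vertex sets together cover the $5$-cycle; outside $K_4$ every $4$-cycle of a polyhedrally embedded cubic graph is induced (by the remark following Corollary~\ref{cor:smallfix}), hence facial, so these smaller cycles are facial and relate the same vertices through a chain. Replacing each non-induced $5$-cycle in a connecting chain by the facial sub-cycles it contains therefore yields a chain of facial cycles joining $v$ to $w$, along which the rotation propagates as above. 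The one point that genuinely demands verification is precisely this reduction, namely that the transitive closure generated by the listed cycles coincides with the one generated by their facial sub-cycles.
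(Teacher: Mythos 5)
Your proof is correct and follows essentially the same route as the paper's: invoke Corollary~\ref{cor:smallfix} to make the short cycle facial, observe that in a cubic graph the facial direction pins down the rotation at every vertex of the cycle and is itself determined by the rotation at $v$, then chain by transitivity. The extra care you take with $K_4$ and with non-induced $5$-cycles is sound and corresponds to remarks the paper places just outside the proof (the note that non-induced $5$-cycles decompose into a $3$- and a $4$-cycle, and that non-induced $4$-cycles force a $2$-cut).
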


\begin{proof}
  For trivial equivalence classes the result is immediate, so assume
  that $f$ is a 3-, 4- or 5-cycle in $G$ containing the edges
  $\{x,v\}$ and $\{y,v\}$. Then fixing the rotation around $v$ fixes
  the order in which $f$ is traversed,
  w.l.o.g.\ $(v,x)=\nx((v,y))$. Due to Corollary~\ref{cor:smallfix}
  this implies that the rotation around $x$ is fixed, as the second
  edge incident with $x$ in the 5-cycle must be the next in the
  rotational order after $(x,v)$. The rest of the proof follows by
  transitivity.
 
  \end{proof}

Note that for the transitive closure it makes no difference whether in
the initial step you choose all 5-cycles or just the induced
ones, as non-induced ones are the union of a 3- and a 4-cycle. Depending on the graph, there can be just one equivalence class
in the best case, but  if the girth is at least 6, each vertex forms a
separate equivalence class.  Having
computed the equivalence classes, we compute the set of
compatible rotations. Already during this computation
there can be conflicts implying that no polyhedral embeddings exist --
see Figure~\ref{fig:contradict}. If in the recursion the rotation of a vertex in an equivalence class is switched, the rotations of
all vertices in the class are switched simultaneously.

\begin{figure}[h!t]
	\centering
	\includegraphics[width=0.3\textwidth]{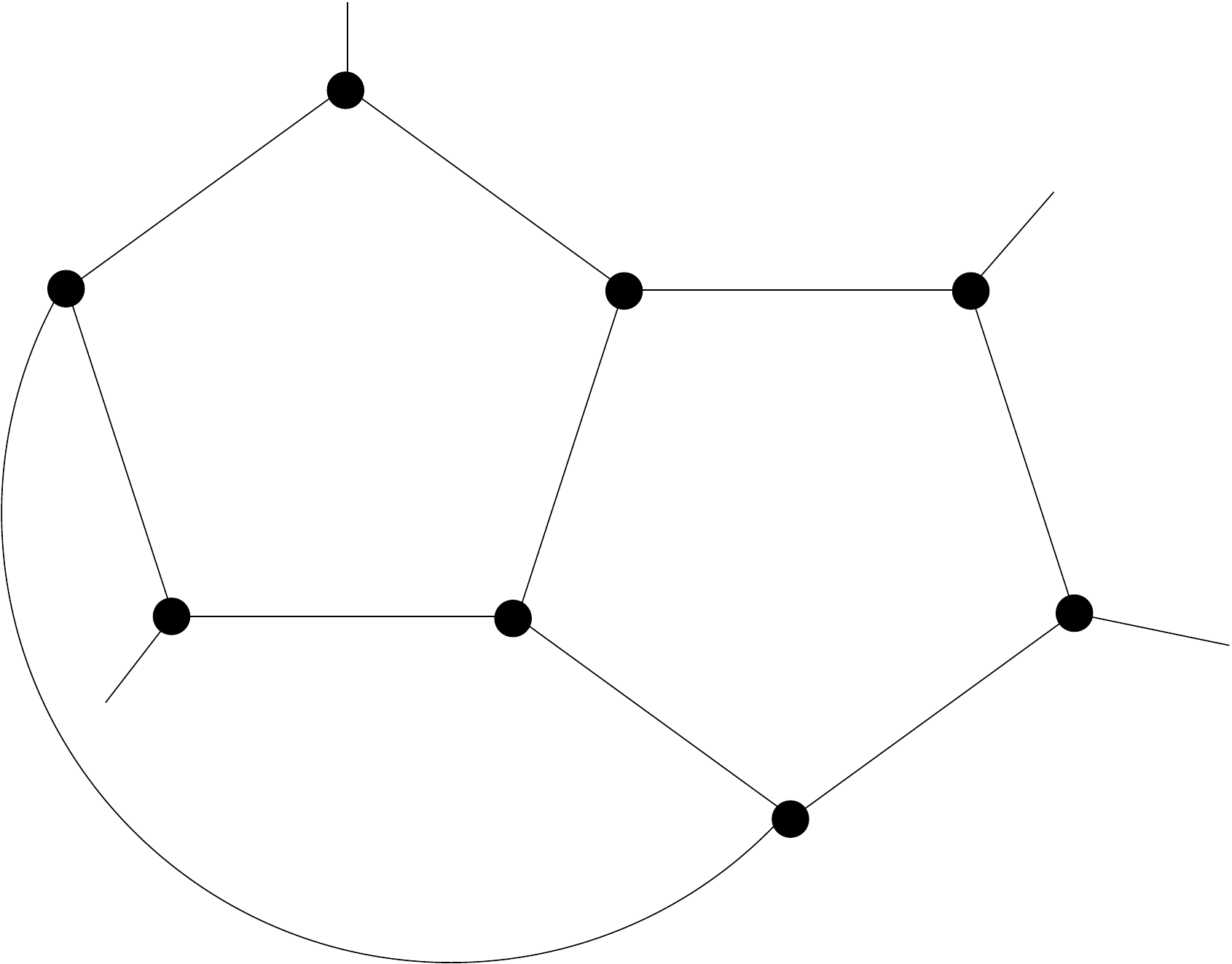}
	\caption{An example of two 5-cycles and a 4-cycle that cannot be embedded as 5-gons and 4-gons.}
	\label{fig:contradict}
\end{figure}

The algorithm then starts by fixing the rotation of the largest
equivalence class and assigning an initial rotation to all other
classes. In the following branch and bound algorithm, the rotations
of equivalence classes are recursively switched in order to enumerate
all possible combinations.  For graphs with girth 6 this does not help
at all, as each equivalence class is just a single vertex, but even if
the graph has just two 5-cycles sharing an edge, the number of
possible rotations to be considered is already decreased by a
factor of 128. The recursion processes the equivalence
classes in decreasing order of their size. Once a rotation is
chosen, the rotation of the vertices in this class is considered to
be fixed until the recursion returns to this class. So at each
recursion depth we have a set of {\em fixed} vertices. As their
rotation will not be modified at higher depths of the recursion, a
contradiction within this set -- e.g. a face with only fixed vertices
and without a simple cycle as boundary -- allows to bound the
recursion and backtrack.

We use the following observation 
when testing for polyhedral embeddings. It is based on the fact that in a cubic graph two cycles sharing a vertex share a whole edge.

\begin{note}

  An embedding of a cubic graph is polyhedral if and only if
  \begin{itemize}
  \item no vertex occurs twice in a facial walk and
  \item no two faces share more than two vertices.
    \end{itemize}

\end{note}

The advantage of this characterization from a computational point of view is that the second
criterion, which is a criterion about all pairs of faces, is reduced
to a very simple test. If the faces are implemented as bitvectors, it
is sufficient to compute the intersection -- a very simple and fast
operation for sets represented as bitvectors -- and afterwards test whether the intersection contains
more than two bits -- again very fast. For graphs that are not cubic,
an extra test would be needed for the case of two common vertices in order to test
whether they are the endpoints of a common edge lying in both faces.

During the recursion, the algorithm distinguishes between the case when
the next equivalence class is a single vertex or not.
As long as
there are nontrivial equivalence classes, that are not fixed, they are processed in
decreasing order of their size. When there are only trivial equivalence classes
left, we use the concept of {\em obstruction}: an obstruction in an
embedded graph is a part of the boundary between two occurences of the
same vertex or the union of the vertex sets of two faces which share at
least 3 vertices.

For trivial equivalence classes, at each recursion depth, a set
$S=v_1,v_2,\dots ,v_k$ is chosen and for each $1\le i\le k$ the
vertices $v_1,\dots ,v_{i-1}$ are declared fixed (with the rotation
they initially have) and vertex $v_i$ is declared fixed after the
rotation was switched. For each $i$ then the next recursion depth
is called with the vertices $v_{i+1}, \dots ,v_k$ being not fixed and available for the next set.
If there is no
obstruction, the embedding is polyhedral and output (or
counted if the options require no output) and $S$ is chosen as the set of all vertices that are not yet fixed.
Otherwise $S$ is chosen as the set of not fixed vertices in an obstruction.

After each switching of a rotation, Corollary~\ref{cor:smallfix} is applied to all hexagons.
This might lead to a contradiction or to uniquely determined rotations for some not yet fixed vertices in these hexagons.
If no contradiction is found, a -- preferably small -- obstruction is searched. If there is none, the embedding is polyhedral
and if there is one with only fixed vertices, we can backtrack.

In fact it turned out to be faster not to do a complete search for
obstructions with only fixed vertices -- except at the end of the
recursion or very close to the root of the recursion.

\bigskip
\textbf{Testing the program:} In order to test the program we wrote
two other programs. A first program creates all possible embeddings of
a given graph simply by writing all possible combinations of orders
around each vertex. We then independently wrote a program to check
whether a given embedded graph is polyhedral by checking that each
face is a cycle and checking the intersection of each pair of
faces. We compared the number of polyhedral embeddings and their genus
for all cubic graphs on up to 22 vertices.
For 24 vertices only the cubic graphs with girth at least 4 were independently tested. We had agreement in all
cases.

\bigskip
\textbf{Performance:} All times are given on a linux laptop
with Core i7-6820HQ CPU, 2.70GHz (running typically at 3.2 Ghz).
Constructing all polyhedral embeddings of all
117,940,535 cubic graphs on 24 vertices takes 14 minutes, so in
average 140,000 graphs per second are processed. The speed is for a
large part due to the presence of small cycles exploited in the first
step of the algorithm. Restricting the tests to the cubic graphs on 24
vertices with girth at least 6, it takes 6.4 seconds for 7,574 graphs
-- so only around 1,200 per second. Testing all 153,863 weak snarks  on up to 30 vertices
takes about 0.55 seconds. Testing all 432,105,682 weak snarks on up to 36 vertices
takes a bit less than half an hour. Constructing all polyhedral embeddings of the Kochol snark on 74
vertices takes 0.03 seconds. Note that the Kochol snark has many pentagons.

\medskip

\subsection{Computational results}
~\\
In \cite{polembsnark} Kochol refuted Gr\"unbaum's conjecture that cubic
graphs that have a polyhedral embedding also have an edge-3-colouring
\cite{gruenbaumconj}, by constructing a snark with 74 vertices and a polyhedral embedding. It has
genus 5 and the (unique) genus embedding is polyhedral and 
in fact also the only polyhedral embedding. The fact that
the genus embedding is unique is -- just like all other genera in this
paper -- computed by the program {\em multi\_genus} described in \cite{genuscomp}. It is
neither known whether Kochol's graph is the smallest counterexample nor whether
there are counterexamples with smaller genus. So we tested all
available smaller snarks and weak snarks in the database HoG
\cite{HoG}, that is all weak snarks on up to 36 vertices, an
incomplete list of 19,775,768 snarks on 38 vertices and 25 snarks with
girth 6 on 40 vertices. None of them allows a polyhedral embedding.

Of course snarks are non-hamiltonian, but having no hamiltonian cycle is a much weaker condition than being a snark, so a question is when the first
non-hamiltonian cubic graphs with a polyhedral embedding occur.
The smallest such graph is the Coxeter graph, which has 28 vertices. It is in fact the only cubic nonhamiltonian graph on up to 28 vertices with a polyhedral embedding.
The Coxeter graph has genus 3 and 16 embeddings in genus 3, which are all isomorphic. The embeddings in genus 3 are the only polyhedral ones.

In order to get an idea of how common polyhedral embeddings of cubic
graphs are, but also in order to search for small examples of cubic graphs
with several polyhedral embeddings, polyhedral embeddings in more than
one genus and even graphs with polyhedral embeddings, but not in their
minimum genus, we tested all cubic graphs on up to 28 vertices for polyhedral embeddings. The lists of
graphs were generated by the program {\em minibaum} described in \cite{minibaum}. The results
are given in Table~\ref{tab:all} and some special embeddings found are
given and used for the construction of infinite sequences in
Section~\ref{sec:further}.

\begin{table}
  	\centering
\begin{tabular}{l|c|c|c|c}
  number          & number    & graphs    & graphs with  & graphs with pol.  \\
  \quad of      & of graphs  &   with pol. emb.  & more than  &  emb. in more \\
  vertices         &                         &                       &    one pol. emb. &  than one genus \\
    \hline
  4  & 1  &  1( 100 $\%$) & 0  & 0  \\
  6 & 2  & 1 ( 50 $\%$) &  0 &  0 \\
  8 & 5  & 2 ( 40 $\%$) & 0  & 0  \\
   10 & 19  & 5 (26 $\%$)  &  0 &  0 \\
   12 & 85  & 14 (16.5 $\%$) & 0  &  0 \\
    14   & 509  & 51  (10 $\%$)& 1  & 0  \\
   16 & 4,060  & 240 (5.9 $\%$) & 4  & 0  \\
  18 &  41,301 &  1,349 (3.3 $\%$) &  28 & 0  \\
  20 & 510,489  & 9,464 (1.9 $\%$)  & 278  & 0  \\
     22  &  7,319,447 & 84,230 ( 1.2 $\%$)  & 2,997  &  0 \\
   24 & 117,940,535  & 909,431 ( 0.77 $\%$) &  32,438 &  0 \\
   26  & 2,094,480,864  & 10,902,162(0.52 $\%$)  & 348,078  &  4 \\
   28  & 40,497,138,011  & 138,008,652 (0.34$\%$)  &  3,909,031 &  157 \\
\end{tabular}
\medskip
\caption{\label{tab:all} Computational results on all cubic graphs with a given number of vertices. Note that {\em ``with more than one polyhedral embedding''} refers to different combinatorial embeddings in the sense of being not identical or mirror images. It does not necessarily refer to non-isomorphic embeddings. }
\end{table}

In Table~\ref{tab:allpolembed} numbers of non-isomorphic polyhedrally
embedded cubic graphs are given. They were obtained by filtering the
output of {\em poly\_embed} for
non-isomorphic embeddings and computing the genus of the underlying
graphs by {\em multi\_genus}. Although even for
28 vertices the ratio of polyhedral embeddings that are not in minimum
genus is still only $0.00016 \%$, in the last step it grows much faster than the number
of polyhedral embeddings. The fact that the same graph embedded in a small genus has more, and
in average smaller, faces than when embedded in a larger genus,
suggests that polyhedral
embeddings should more often occur in or close to the smallest genus
that the graph can be embedded in. Though the computed numbers seem to
support such a claim, the numbers are much too small to draw well
grounded conclusions.

\begin{table}
  	\centering
\begin{tabular}{l|c|c|c}
  number          & number    & number of   & number of \\
  \quad of      & of pol.  &   min genus  &   not min genus  \\
  vertices         &   embeddings   &   pol. emb.    &   pol. emb.  \\
    \hline
  4  & 1 &  1 & 0  \\
  6 & 1&  1 & 0 \\
  8 &  2  & 2 & 0 \\
   10 &  5  & 5 & 0   \\
   12 &  14  & 14 & 0   \\
    14   &  51  &  51 & 0   \\
   16 &  240  & 240 & 0   \\
  18 &  1,361   &  1,361& 0   \\
  20 &  9,704  & 9,704 & 0    \\
     22  &  87,433   & 87,433 & 0    \\
   24 &  946,083  & 946,083 & 0    \\
   26  &  11,298,676  & 11,298,671 & 5 \\
   28  &  142,414,959  & 142,414,731 &  228\\
\end{tabular}
\medskip
\caption{\label{tab:allpolembed}  Computational results on all non-isomorphic polyhedrally embedded cubic graphs with a given number of vertices. Here,  {\em non-isomorphic} refers to the concept of isomorphism of embedded graphs -- not of abstract graphs. }
\end{table}

\section{Constructions and further results on polyhedral embeddings}\label{sec:further}

As Theorem~\ref{thm:polyhedra} states, polyhedral embeddings of 
3-connected planar graphs are unique and always in the plane -- which is of course also the minimum genus in which the graph can be embedded.
This behaviour changes dramatically
once we move away from planar graphs. Already for genus $1$ there are
cubic graphs $G$ with $\gen(G)=1$ that have polyhedral embeddings of different genera as well as such graphs with polyhedral embeddings -- but not in minimum genus.

As the number of faces decreases when the genus increases, there is a
natural upper bound for the genus of a polyhedral embedding with a given number of vertices and edges:

\begin{lemma}
  If $P$ is a polyhedral embedding of a cubic graph with $n$ vertices, then the genus $g(P)$ fulfills

  \[ g(P) \le  \frac{n-\sqrt{12n+1}+3}{4}  \]

  This bound is sharp for infinitely many $P$. To be exact: for each $n$ for which the right hand side of the formula is an integer number, there
  is a polyhedral embedding $P$ of a cubic graph with $n$ vertices and genus $g(P) =  \frac{n-\sqrt{12n+1}+3}{4}$.

\end{lemma}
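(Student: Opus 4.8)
The plan is to treat the inequality and the sharpness statement separately; the inequality is a short Euler-characteristic computation, while sharpness rests on the existence of triangular embeddings of complete graphs. For the bound I would write $E=\tfrac{3n}{2}$ for the number of edges and let $F$ denote the number of faces. Euler's formula $n-E+F=2-2g(P)$ then gives
\[ g(P)=1+\tfrac{n}{4}-\tfrac{F}{2}, \]
so maximising the genus amounts to making $F$ as small as possible, and I only need a lower bound on $F$. Polyhedrality supplies this directly: each edge is incident with exactly two faces and these are distinct (a simple facial cycle cannot traverse an edge in both directions), while two faces meet in at most one edge. Hence sending an edge to the unordered pair of faces bounding it is an injection into the pairs of distinct faces, so $E\le\binom{F}{2}$ --- equivalently, the dual is simple. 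This reads $3n\le F(F-1)$, i.e.\ $F\ge\tfrac{1+\sqrt{1+12n}}{2}$, and substituting into the displayed expression for $g(P)$ yields exactly the claimed upper bound.

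For sharpness I would track the equality case. Equality $E=\binom{F}{2}$ means every pair of faces is adjacent along an edge, i.e.\ the dual graph is the complete graph $K_F$; as the primal is cubic this dual is a triangulation. Thus achieving the bound is the same as producing an orientable triangular embedding of $K_F$ and taking its dual. A direct computation from $g(P)=1+\tfrac{n}{4}-\tfrac{F}{2}$ with $n=\tfrac{F(F-1)}{3}$ shows that in the equality case $g(P)=\tfrac{(F-3)(F-4)}{12}$, which is precisely the orientable genus of $K_F$. Moreover the right-hand side of the lemma is an integer exactly when $12n+1$ is a perfect square $(2F-1)^2$ with $\tfrac{(F-3)(F-4)}{12}$ an integer, that is, when $F\equiv 0,3,4,7\pmod{12}$. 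By the Ringel--Youngs theorem (the Map Color Theorem alluded to in the introduction) these are exactly the $F$ for which $K_F$ admits an orientable triangular embedding, so for each admissible $n$ such an embedding exists, and letting $F$ run through these residues produces infinitely many tight examples.

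The remaining point, and the place where I would be most careful, is checking that the dual $G$ of such a triangular embedding really is an admissible example: a \emph{simple} cubic graph carrying a polyhedral embedding. The embedding is polyhedral since its dual $K_F$ is simple, and $G$ is cubic because every face of $K_F$ is a triangle. For simplicity of $G$ I would argue that a loop would require a single triangle bounding an edge of $K_F$ on both sides, and a double edge would require two triangular faces sharing two edges --- hence all three vertices --- which cuts off a sphere and is impossible in a connected triangulation of $K_F$ for $F\ge4$. The inequality is elementary; the genuine difficulty is the sharpness half, where one must recognise that tightness forces the dual to be complete and then appeal to the deep Ringel--Youngs triangulations to realise it.
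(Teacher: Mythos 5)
Your proof is correct and follows essentially the same route as the paper: bound the genus via Euler's formula together with the observation that polyhedrality forces the dual to be simple (so $\frac{3n}{2}\le\binom{F}{2}$), and obtain sharpness by reducing the integrality of the right-hand side to $F\equiv 0,3,4,7\pmod{12}$ and invoking the Ringel--Youngs triangular embeddings of $K_F$. Your additional verification that the cubic dual of such a triangular embedding is itself simple is a detail the paper asserts without proof, but it does not change the argument.
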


\begin{proof}

  An embedding of a cubic graph is polyhedral if and only if the dual
  is a simple graph. As the dual graph also has $\frac{3n}{2}$ edges
  and a simple graph with $f$ vertices has at most $\frac{f(f-1)}{2}$
  edges, we get $f\ge \sqrt{3n+0.25}+0.5$ and inserting this into the
  Euler formula and the formula for the genus we get the result in the
  lemma.


  If $\frac{n}{4} - \frac{\sqrt{3n+0.25}}{2} +\frac{3}{4}$ is an integer number, replacing $n$ by $\frac{f(f-1)}{3}$ and simplifying the term gives that 
  this is the case if and only if $\frac{(f-3)(f-4)}{12}$ is an integer. This can easily be seen to be the case if and only if $f \Mod {12} \in \{0,3,4,7\}$ -- allowing
  a triangular embedding of the complete graph $K_f$  (see  \cite{top_graph_theory}, \cite{genus_complete_graph}, or \cite{facedistributions})
  with the given genus, which has a simple cubic dual with the given number $n$ of vertices. 
  
\end{proof}

Note that the dual cubic graph of a triangular embedding of a complete
graph is polyhedrally embedded in the same genus as the complete
graph, but that this is not necessarily a minimum genus embedding. In
fact tests for duals of some embeddings of $K_{12}$ gave cubic graphs
with $44$ vertices and genus 4 or 5 (instead of 6 as $K_{12}$), but
the only polyhedral embeddings were in genus 6. We did not do a
complete enumeration of all possible embeddings of $K_{12}$.

In Theorem 8.2 of \cite{Thomassen_90} an infinite sequence of toroidal
cubic graphs is given that -- with $n$ the number of vertices -- have an embedding with
facewidth at least $4$ in genus $\frac{n}{8}-2\sqrt{\frac{n}{8}}+1$,
so up to minor terms about half the genus of the maximum possible
genus. If one only requires that the embedding is polyhedral, we can
-- up to minor terms -- even get the same value as the maximum
possible genus. We first need the following lemma:

\begin{lemma}\label{lem:2hextorus}

  Let $G$ be the graph of a hexagonal tiling $T$ of the torus where all cycles of length at most 6 are faces.
  Then $G$ has (up to mirror images) at most 2 polyhedral embeddings.

\end{lemma}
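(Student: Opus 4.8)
\emph{Reduction to sign sets.} Since every cycle of length at most $6$ is a face, $G$ has girth $6$ and its $6$-cycles are exactly the hexagonal faces of $T$; in particular each is induced and there are exactly $n/2$ of them, with $n=|V(G)|$. I would fix the rotation system $\rho_0$ of $T$ and encode any other combinatorial embedding by the set $S\subseteq V(G)$ of vertices whose cyclic order differs from $\rho_0$; at a cubic vertex there are only two cyclic orders, so this captures all the freedom. The mirror image of the embedding coded by $S$ is coded by $V(G)\setminus S$, so the claim ``at most two embeddings up to mirror images'' is precisely that at most four sets $S$ occur. As $\emptyset$ and $V(G)$ already give $T$ and its mirror, it suffices to show that at most one further complementary pair of sets can arise.

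\emph{Local admissibility.} For a hexagon $H$, all three off-cycle edges lie on one side of $H$ in $\rho_0$ (it is a face), and flipping the rotation at a vertex of $H$ moves its off-edge to the other side. Hence under $S$ the cycle $H$ is a face exactly when $|S\cap H|\in\{0,6\}$, and by Corollary~\ref{cor:smallfix} the only further polyhedral option is $|S\cap H|=3$ with neither the three flipped nor the three unflipped vertices forming a path of length $2$ on $H$. Writing $H$ as an alternating $6$-cycle and counting, this leaves on each hexagon only the monochromatic patterns, the two alternating triples, and the ``$(1,2,3)$'' triples; every split into $1,2,4$ or $5$ vertices and every triple of three consecutive vertices are forbidden.

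\emph{Bipartiteness and the four candidates.} I would next record that $G$ is bipartite with colour classes $A,B$: the universal cover of $T$ is the honeycomb tiling of the plane, which is bipartite, and the deck group consists of fixed-point-free orientation-preserving plane isometries, hence translations, and no translation of the honeycomb interchanges its two sublattices. Taking $S\in\{\emptyset,V(G),A,B\}$ makes every hexagon monochromatic or alternating, so these four sets are admissible and form the two complementary pairs $\{\emptyset,V(G)\}$ and $\{A,B\}$. The goal is thus reduced to showing that these are the only admissible $S$, equivalently that every admissible $S$ is constant on $A$ and on $B$. The two pure cases are easy: if every hexagon is monochromatic then the two vertices shared by adjacent hexagons force equal values, and connectivity of the hexagon-adjacency graph of the torus gives $S\in\{\emptyset,V(G)\}$; if every hexagon is alternating, the same propagation forces $S\in\{A,B\}$.

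\emph{The crux.} The step I expect to be the main obstacle is eliminating the mixed ``$(1,2,3)$'' patterns. A single hexagon may legally carry such a pattern, and one checks that it can even be continued across a neighbour, so a purely local propagation does not close; the impossibility is a global property of the torus. The plan is to introduce honeycomb coordinates $A_{i,j},B_{i,j}$, write the weight of the hexagon $H_{i,j}$ as $P_{i,j}+Q_{i,j}$, where $P$ and $Q$ are the numbers of flipped $A$- and $B$-vertices on it, and observe that constancy on the classes is exactly the statement that every $(P_{i,j},Q_{i,j})$ avoids $(1,2)$ and $(2,1)$. A hexagon with such a value has a unique ``odd'' vertex in one class, and I would show that this defect is forced into a neighbouring hexagon, so that mixed hexagons line up along closed non-contractible curves on the torus. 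Tracking the weight and non-consecutiveness conditions once around such a curve should yield a parity contradiction, leaving only the monochromatic and alternating cases and hence at most four admissible $S$, that is, at most two polyhedral embeddings up to mirror images.
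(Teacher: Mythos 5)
Your setup (encoding a candidate embedding by the flip-set $S\subseteq V(G)$, the four candidates $\emptyset,V,A,B$, bipartiteness via the universal cover, and the propagation argument in the purely monochromatic and purely alternating cases) matches the paper's proof, which likewise reduces the lemma to excluding a hexagon carrying a mixed $3$--$3$ pattern. But that exclusion --- which you correctly identify as the crux --- is not proved in your text: you only outline a plan, and the plan cannot work as stated. You propose to derive a contradiction solely from the conditions supplied by Corollary~\ref{cor:smallfix} (each hexagon has $0$, $3$ or $6$ flipped vertices, and in the $3$--$3$ case neither triple consists of three consecutive vertices), by following mixed hexagons around a non-contractible curve and finding a parity obstruction. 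Those conditions, however, admit global solutions containing mixed hexagons, so no argument using only them can succeed. Concretely, coordinatize the honeycomb so that $A(i,j)$ is adjacent to $B(i,j)$, $B(i-1,j)$ and $B(i,j-1)$, and put $A(i,j)\in S$ iff $i-j\equiv 2,3\pmod 4$ and $B(i,j)\in S$ iff $i-j\equiv 0,1\pmod 4$. A direct check shows that every hexagon then has exactly three flipped vertices, that in every hexagon both the flipped and the unflipped triple have gap pattern $(1,2,3)$ (so neither is three consecutive vertices), and that every hexagon is mixed. This assignment is invariant under the index-$4$ translation lattice $\{(p,q):p\equiv q\pmod 4\}$ and hence descends to arbitrarily large hexagonal tori satisfying the hypothesis of the lemma; your intended parity contradiction would have to fail there. (It also shows that your remark that ``the impossibility is a global property of the torus'' is off the mark: at the level of hexagon conditions there is no impossibility at all.)

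The missing ingredient is that one must examine the faces of the candidate embedding $T'$ itself, not merely vertex counts on the hexagons of $T$. This is exactly what the paper does: given a mixed hexagon it picks an edge both of whose endpoints are flipped, traces the two faces of $T'$ passing through the two hexagons of $T$ that share this edge (Figure~\ref{fig:hex2}), and finds that these two faces share two edges, contradicting polyhedrality. The hypothesis that all cycles of length at most $6$ are faces is used precisely here, to guarantee that the two shared edges are distinct; note also that this contradiction is local (it lives in a bounded neighbourhood of one mixed hexagon), not global. To repair your proof you would need to replace the planned parity argument by this face-tracing step, or by some other genuine use of the face structure of $T'$.
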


\begin{proof}

  The toroidal embedding $T$, its mirror image $T^{-1}$, as well as
  the two embeddings $T_a, T_a^{-1}$ (one a mirror image of the other) where in each
  hexagon the rotations are alternatingly like in $T$ and like in
  $T^{-1}$ might be polyhedral. Note that if in $T_a$ or $T_a^{-1}$ the rotations in one original hexagon are given, they are uniquely determined for all others.

  If $T'$ is a polyhedral embedding of $G$ that is none of these, then
  -- due to Corollary~\ref{cor:smallfix} -- there is a hexagon where
  at three vertices the rotation is like in $T$ and at three vertices
  the rotation is like in $T^{-1}$ and as $T'$ is neither $T_a$ nor
  $T_a^{-1}$, the hexagon can also be chosen in a way that at two
  neighbouring vertices the rotation is like in $T^{-1}$. In
  Figure~\ref{fig:hex2} the two possible distributions of rotations in
  $T'$ for the two hexagons of $T$ sharing this edge are
  depicted. Parts of two faces of $T'$ are displayed by bold
  resp.\ dotted lines. As these two faces share at least two edges,
  $T'$ cannot be a polyhedral embedding. The condition that there are
  no short non-facial cycles is needed to guarantee that the
  two edges in Figure~\ref{fig:hex2} leading to the contradiction are distinct.

\begin{figure}[h!t]
	\centering
	\includegraphics[width=0.4\textwidth]{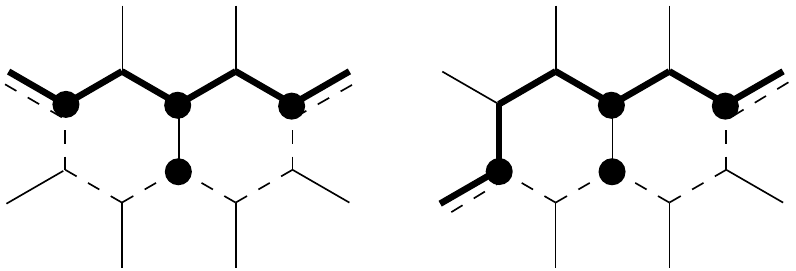}
	\caption{Portions of the hexagonal embeddings $T$ and $T'$ on the torus. $T$ is described by choosing the rotation as given by the normal clockwise
          traversal of edges around all vertices.  For $T'$ the rotation is reversed -- that is: like in $T^{-1}$ -- for bold vertices and like in $T$ for the
          other vertices.  Parts of two faces of $T'$ are marked with bold resp.\ dotted lines.}
	\label{fig:hex2}
\end{figure}

\end{proof}

\begin{lemma}\label{lem:hextorus}

  There are infinitely many $n$, so that there is a cubic graph with $n$ vertices and with
  a polyhedral embedding on the torus as well as on a surface of genus $\frac{n}{4} -\frac{3}{2}\sqrt{\frac{n}{2}}+1$
  -- and no other polyhedral embedding.

\end{lemma}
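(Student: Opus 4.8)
The plan is to realise the two polyhedral embeddings allowed by Lemma~\ref{lem:2hextorus} on a single, highly symmetric family of hexagonal tori. Let $\Lambda$ be the translation lattice of the infinite hexagonal tiling, so that one fundamental domain carries two vertices, three edges and one hexagon, and for an integer $k$ let $G_k$ be the cubic graph of the hexagonal tiling $T$ of the flat torus $\mathbf{R}^2/k\Lambda$. Since $[\Lambda:k\Lambda]=k^2$, the graph $G_k$ has $n=2k^2$ vertices, $3k^2$ edges and $k^2$ hexagonal faces. For all sufficiently large $k$ the shortest noncontractible cycle has length greater than $6$, and as every cycle of length at most $6$ in a hexagonal tiling bounds a hexagon, the hypothesis of Lemma~\ref{lem:2hextorus} holds; moreover $T$ is clearly polyhedral (its faces are simple $6$-cycles meeting in at most an edge) and lies on the torus. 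Hence by Lemma~\ref{lem:2hextorus} the only candidates for polyhedral embeddings of $G_k$, up to mirror images, are $T$ and the alternating embedding $T_a$, so everything reduces to analysing $T_a$.

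Next I would identify the faces of $T_a$. As the hexagonal tiling is bipartite, the alternating reversal keeps the rotation of $T$ on one colour class and reverses it on the other. Tracing a facial walk of $T_a$ one passes alternately through the two classes, so the rule $\nx(e^{-1})$ acts as in $T$ at one class and as in $T^{-1}$ at the other; this is exactly the alternating left/right turning rule, so the faces of $T_a$ are precisely the Petrie (zigzag) polygons of $T$.

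I would then count these zigzags. Writing $\vec e_1,\vec e_2,\vec e_3$ (with $\vec e_1+\vec e_2+\vec e_3=0$) for the three edge directions, the zigzags fall into three families: the family using the directions $\vec e_i,\vec e_{i+1}$ advances by the primitive lattice vector $\vec v_i=\vec e_i-\vec e_{i+1}$ every two edges. On $\mathbf{R}^2/k\Lambda$ such a zigzag closes after exactly $k$ periods, giving a cycle of length $2k$; since the $6k^2$ directed edges split evenly into the three families of $2k^2$ each, every family contains $2k^2/(2k)=k$ zigzags, so $F_a=3k$. Euler's formula now gives $g(T_a)=\tfrac n4-\tfrac{F_a}{2}+1=\tfrac n4-\tfrac32\sqrt{n/2}+1$ (using $\sqrt{n/2}=k$), which is the claimed value.

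The hard part will be verifying that $T_a$ is genuinely polyhedral, for which I would combine the Note with the observation that in a cubic graph two faces meeting in a vertex already share an edge there. Each facial walk is a simple cycle because a primitive vector $\vec v_i$ has order exactly $k$ in $\Lambda/k\Lambda$, so a zigzag visits $2k$ distinct vertices before closing. Zigzags of one family are pairwise disjoint, as the $k$ of them partition the whole vertex set. Two zigzags from different families have a single common edge direction; they traverse such an edge in opposite orientations and diverge at both endpoints, and the $k$ candidate edges of a fixed zigzag are distributed one apiece over the $k$ zigzags of the other family, because translation by $\vec v_i$ permutes that family in a single $k$-cycle. Thus any two distinct faces share at most one edge, and by the cubic observation their intersection is empty or a single edge. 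Hence $T_a$ is polyhedral, and together with the toroidal embedding $T$ and the bound of Lemma~\ref{lem:2hextorus} this exhibits, for every sufficiently large $k$ and $n=2k^2$, a cubic graph with exactly two polyhedral embeddings, one on the torus and one of genus $\tfrac n4-\tfrac32\sqrt{n/2}+1$.
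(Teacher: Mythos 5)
Your proposal is correct and follows essentially the same route as the paper: the $k\times k$ hexagonal torus with $n=2k^2$ vertices, reversing the rotation on one bipartition class so that the faces become the $3k$ Petrie (zigzag) $2k$-gons, Euler's formula for the genus, and Lemma~\ref{lem:2hextorus} to rule out further embeddings. The only difference is that you spell out, via the lattice $\Lambda/k\Lambda$, the details the paper dismisses as ``easy to see'' (that the zigzags are simple cycles and that two of them share at most one edge), and you invoke ``sufficiently large $k$'' where the paper pins down $k\ge 4$ and comments on $k=3$ separately.
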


\begin{figure}[h!t]
	\centering
	\includegraphics[width=0.35\textwidth]{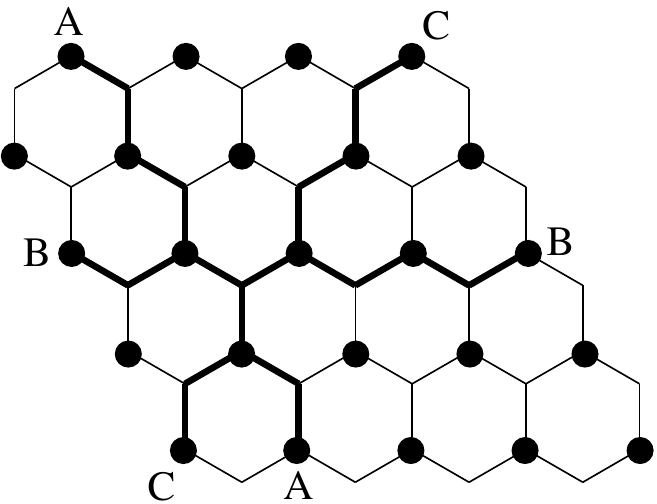}
	\caption{Two embeddings of a cubic graph: a hexagonal embedding $T$ on the torus with 16 hexagons is given by the rotation defined by the clockwise
          traversal of edges around all vertices.  A second embedding is described by defining the rotation around bold vertices to be reversed (that is: coming
          from the counterclockwise traversal of the edges) and around the other vertices like in $T$. Some facial cycles of the second embedding, which is
          in a higher genus are marked with fat lines.}
	\label{fig:hex}
\end{figure}

\begin{proof}

  Let $n$ be any number, so that $n=2k^2$ for some $k\ge 3$. Let $T$ be the cubic bipartite polyhedral embedding on the torus, formed by a $k\times k$
  parallelogram of $k^2$ hexagons, depicted for $k=4$ in Figure~\ref{fig:hex} with the sides identified like indicated by the letters in the figure.  This cubic
  graph has $n=2k^2$ vertices and $3k^2$ edges.  If we switch the rotation at each vertex of one of the bipartition classes, the faces of this new graph are
  zig-zag walks in the old one, also known as Petrie walks in a more general context. They are depicted in bold in Figure~\ref{fig:hex} for three closed Petrie
  walks.  Each face is a simple cycle and has $2k$ edges. As each edge is in two faces this gives $3k$ faces and therefore a genus of $\frac{k^2-3k+2}{2}$.
  Replacing $k$ by $\sqrt{\frac{n}{2}}$, we get the result in the lemma. It is easy to see that two faces either share no edges -- if they correspond to
  parallel Petrie walks -- or one edge, so that the embedding is polyhedral.

  The last part is a direct consequence of Lemma~\ref{lem:2hextorus} and that for $k=3$ (so $n=18$) -- where  Lemma~\ref{lem:2hextorus} cannot be applied --
  we get $\frac{n}{4} -\frac{3}{2}\sqrt{\frac{n}{2}}+1=1$.
  
\end{proof}

For a cubic graph $G$ and a genus $g$ let $n_g(G)$ denote the number of different polyhedral embeddings of $G$ in genus $g$ with mirror images identified
(or equivalently: the rotation around one vertex fixed). When referring to {\em different} polyhedral embeddings, we always assume that mirror images are considered
to represent the same embedding.

Note that the Heawood graph $H$ on the torus -- that is: the dual of $K_7$ embedded on the torus -- has many 6-cycles
that are not faces. It is a cubic hexagonal tiling and has (up to
mirror images) 8 different polyhedral embeddings on the torus, so $n_1(H)=8$. These 8 embeddings
are isomorphic. We will now prove the existence of some cubic graphs with special embedding properties:

\begin{theorem}\label{thm:existence}

  For each $k>0$  there is 
  
  \begin{description}
\item[(a)] a cubic graph $G$ admitting two polyhedral embeddings $P$, $P'$ so that the difference between the genera of these embeddings is at least $k$ and there is no embedding for any genus in between.

\item[(b)] a cubic graph $G$ admitting polyhedral embeddings in at least $k$ genera.

\item[(c)] a cubic graph $G$ and a genus $g$, so that $n_g(G)\ge k$. In fact for genus $g$ there is a graph $G$ with $n_g(G)\ge 8^g$.
  This is also true if one requires the embeddings to be non-isomorphic.

  \item[(d)] a cubic graph $G$, so that the smallest genus of a polyhedral embedding of $G$ differs from $\gen(G)$ by at least $k$.

    \end{description}

\end{theorem}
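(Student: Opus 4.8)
The plan is to obtain part~(a) directly from Lemma~\ref{lem:hextorus} and to reduce parts~(b), (c) and~(d) to a single gluing operation applied to the explicit building blocks already in hand: the Heawood graph $H$ with $n_1(H)=8$, the graphs of Lemma~\ref{lem:hextorus}, and the $44$-vertex dual-of-$K_{12}$ example discussed above. Part~(a) I would simply read off Lemma~\ref{lem:hextorus}: for every $n=2k'^2$ with $k'\ge 3$ it produces a cubic graph whose \emph{only} two polyhedral embeddings lie on the torus and in genus $1+\tfrac{k'(k'-3)}{2}$. Since these are the only polyhedral embeddings there is automatically none of intermediate genus, and the genus gap $\tfrac{k'(k'-3)}{2}$ is unbounded; choosing $k'$ with $\tfrac{k'(k'-3)}{2}\ge k$ finishes part~(a).

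The heart of the remaining parts is a connected-sum operation $G_1\oplus G_2$ on polyhedrally embedded cubic graphs. I would delete one edge from each summand and reconnect the four resulting degree-$2$ vertices by two new edges routed through a thin tube joining the two surfaces, so that the result is again cubic; to make the join \emph{rigid} I would surround each deletion site by a small fixed pattern of triangles and quadrilaterals, so that Corollary~\ref{cor:smallfix} and Proposition~\ref{note:equclass} pin down the rotations near the tube. The four properties I need are: (i) $G_1\oplus G_2$ is cubic; (ii) gluing two polyhedral embeddings yields a polyhedral embedding; (iii) because the waist of the tube is a separating curve realising a connected sum of the two surfaces, the genus of the result is the sum of the two summand genera; and (iv) conversely, every polyhedral embedding of $G_1\oplus G_2$ restricts to polyhedral embeddings of the two summands. \textbf{Property (iv) is the main obstacle}: it is precisely what forces the connector to be a rigid small-cycle gadget rather than a bare pair of edges, and establishing it is exactly the local analysis via Corollary~\ref{cor:smallfix} that has to be carried out.

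Granting this operation, the three parts follow by choosing summands. For part~(c) I would let $G$ be the $\oplus$-sum of $g$ copies of $H$, arranged in a rigid, asymmetric fashion so that the gluing sites are pairwise distinguishable. By~(iii) its genus is $g$, and by~(ii) each copy may independently carry any of its $8$ toroidal polyhedral embeddings, giving $n_g(G)\ge 8^g$; since fixing the attachment edge removes the symmetry that identifies the eight toroidal embeddings of $H$, the eight choices per copy become mutually non-isomorphic and the slots are not interchangeable, so the $8^g$ embeddings are non-isomorphic. For part~(b) I would take $G$ to be the $\oplus$-sum of $k-1$ copies of one Lemma~\ref{lem:hextorus} graph $G_0$ with second polyhedral genus $h\ge 2$ (e.g.\ $n=32$, $h=3$); letting $s$ of the copies use their high-genus embedding and the rest their toroidal one, (ii) and~(iii) produce a polyhedral embedding of genus $(k-1)+s(h-1)$, and as $s$ ranges over $0,\dots,k-1$ these are $k$ distinct genera. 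Note that part~(b) uses only~(i)--(iii), not the delicate property~(iv).

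For part~(d) I would take $G$ to be the $\oplus$-sum of $k$ copies of a cubic gadget $G_0$ with a \emph{provable} gap between its genus and its smallest polyhedral genus; the $44$-vertex dual of $K_{12}$, with $\gen(G_0)\le 5$ yet every polyhedral embedding in genus $6$, is the natural candidate. An explicit connected sum of minimum-genus embeddings of the copies exhibits an embedding of $G$ of genus $\le 5k$, so $\gen(G)\le 5k$, whereas by~(iv) together with additivity of genus across the separating tube-waists, every polyhedral embedding of $G$ decomposes into polyhedral embeddings of the copies, each of genus $\ge 6$, hence has genus $\ge 6k$. Thus the smallest polyhedral genus of $G$ exceeds $\gen(G)$ by at least $k$. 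Beyond property~(iv), the two technical points I expect to need here are the additivity of genus used in the lower bound, which I would justify through the separating curves created by the tubes, and securing a single gadget with a genus gap that is established rigorously rather than only computationally.
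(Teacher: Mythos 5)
Your part~(a) is exactly the paper's argument and is fine. The fatal problem is the gluing operation on which parts~(b), (c) and~(d) all rest. You delete one \emph{edge} from each summand and rejoin the four degree-$2$ vertices by \emph{two} new edges. Any such rejoining that connects the two summands is a perfect matching between $\{x,y\}\subset G_1$ and $\{u,v\}\subset G_2$, so the two new edges form a $2$-edge-cut of $G_1\oplus G_2$; in a cubic graph this forces a $2$-vertex-cut, the graph is not $3$-connected, and therefore it admits \emph{no} polyhedral embedding in any surface (this is the same observation the paper makes about non-induced $4$-cycles). No amount of local rigidifying with triangles and quadrilaterals around each deletion site can repair this, because those gadgets live inside a single summand and do not change the fact that only two edges cross the cut. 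So properties (ii)--(iv) are not merely delicate: they are false for your operation, and the constructions for (b), (c), (d) produce graphs with no polyhedral embeddings at all.

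The repair is to connect along a $3$-edge-cut rather than a $2$-edge-cut: delete a \emph{vertex} from each summand and join the resulting three degree-$2$ vertices on each side by three edges. This is the star product $G_v \star G'_{v'}$ of the paper, and Lemma~\ref{lem:extension} is precisely your property~(iv) in sharp form, giving the exact count $n_g(G_v \star G'_{v'})=\sum_{j+j'=g} n_j(G)\, n_{j'}(G')$; its converse direction (every polyhedral embedding of the product splits) follows from Proposition~\ref{note:3cut} applied to the $3$-edge-cut, with no rigidifying gadget needed. With that substitution your choice of building blocks mostly matches the paper's: iterated products of the Heawood graph give $8^g$ embeddings in genus $g$ for~(c) (though the non-isomorphy claim needs the separate argument of Lemma~\ref{lem:triangle2}, not just ``fixing the attachment edge''; the eight toroidal embeddings of $H$ are isomorphic and an asymmetric attachment by itself does not obviously break all embedding isomorphisms). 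For~(b) your idea of mixing low- and high-genus embeddings of copies of a two-genus graph is exactly Corollary~\ref{cor:help}(b); the paper uses a $26$-vertex example, but a Lemma~\ref{lem:hextorus} graph would also work. For~(d) the paper uses the computationally verified $28$-vertex examples and doubles the gap via Corollary~\ref{cor:help}(a); your $K_{12}$-dual candidate has the same computational status, so it is not an improvement in rigor, and the genus upper bound $\gen(G\star G')\le \gen(G)+\gen(G')$ again comes from the three-edge version of the construction.
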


In fact (a) is a direct consequence of Lemma~\ref{lem:hextorus} and mentioned here only for completeness. In order to prove the other items, we will now first present
an operation on graphs that preserves some properties with respect to polyhedral embeddings. Then we will present some graphs with special properties of their polyhedral embeddings, which
we will use as building blocks for this operation.

\medskip

\subsection{Combining graphs}
~\\ The following construction for nontrivial 3-cuts has already been
used in several papers under various names and to study various
invariants. It was e.g. called {\em marriage} in \cite{kotzig_mariage}
when studying Hamiltonian cycles or {\em star product} in
\cite{movo2006} when studying edge colourings
of cubic graphs in the context of polyhedral embeddings.

For a graph $G$ with a vertex $v$ let $G_v$ denote the graph $G$ with the vertex $v$ deleted.

Let $G, G'$ be graphs with cubic vertices $v$ in $G$ and $v'$ in $G'$ and let the neighbours of $v$ be $a,b,c$ and those of $v'$ be $a',b',c'$.
Then $G_v \star_{(a,b,c,a',b',c')} G'_{v'}$ denotes a graph obtained by connecting $G'_{v'}$ to $G_v$ by adding the edges 
$\{a,a'\},\{b,b'\},\{c,c'\}$.
If $G,G'$ are embedded, the edges are inserted in the rotation at the same place where the edges from $v$, resp.\ $v'$ were located.
In cases where the exact identification is not important or clear from the context, we also write shortly $G_v \star G'_{v'}$ to simplify notation.
If $G,G'$ are embedded with genus $g$, resp.\ $g'$, in $G_v$ the vertices $a,b,c$ occur in the order $a,b,c$ and in $G'_{v'}$ the vertices $a',b',c'$ occur in the order $a',c',b'$, the fact
that  $G_v \star_{(a,b,c,a',b',c')} G'_{v'}$ is embedded with genus
$g+g'$ can now be
  obtained by the fact that (with $v()$ the number of vertices, $e()$ the number of edges and $f()$ the number of faces of a graph)
  $v(G_v \star G'_{v'})=v(G)+v(G')-2$, $e(G_v \star G'_{v'})=e(G)+e(G')-3$, and $f(G_v \star G'_{v'})=f(G)+f(G')-3$. As minimum genus embeddings of
  $G$ and $G'$ can be chosen in a way that the rotations around $v$ and $v'$ imply this order, we get $\gen(G_v \star G'_{v'})\le \gen(G)+\gen(G')$ for all possible
  $(a,b,c,a',b',c')$.

Lemma~\ref{lem:3edgecut} is essentially Theorem~3.1 from
\cite{movo2006}, but as we need it in a slightly different formulation
and also allow non-cubic graphs, we will just reprove it here:

We will need the following proposition also following from the proofs of Theorem~3.1, Theorem~3.3 and Theorem~3.5 in \cite{movo2006}.
An edge cut $S$ is a set of edges, so that there exists a bipartition of the vertex set of the graph, so that $S$ is the set of edges with endpoints in both sets.

\begin{proposition}\label{note:3cut}

  For $3\le k\le 5$ let $e_1,e_2,\dots,e_k$ be a $k$-edge-cut in a polyhedral embedding of a (not necessarily cubic) graph $G$.
  Then the subgraph of the dual graph whose vertices are the faces containing an edge in $\{e_1,e_2,\dots,e_k\}$ and whose edges correspond to $e_1,e_2,\dots,e_k$ is a cycle.
  Furthermore: if $C$ is a component of $G-\{e_1,e_2,\dots,e_k\}$ with the embedding induced by $G$,
  then all endpoints of  $e_1,e_2,\dots,e_k$ in $C$ are in the same face of $C$.

  \end{proposition}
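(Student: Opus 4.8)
The plan is to establish the dual-cycle claim first and then read off the face statement from it. Write $S=\{e_1,\dots,e_k\}$ and let $(A,B)$ be the vertex bipartition defining $S$, so the edges of $S$ are exactly those with one endpoint in $A$ and one in $B$. In the dual, each $e_i$ gives a dual edge $e_i^{*}$ joining the two faces flanking $e_i$; these two faces are distinct because facial walks are simple cycles. Let $H$ be the subgraph of the dual on the edges $e_1^{*},\dots,e_k^{*}$. First I would record the elementary but crucial fact that a polyhedral embedding has a loopless, multiedge-free dual: a dual loop would force an edge whose two sides lie in a single face, contradicting simplicity of facial walks, while a dual multiedge would be two faces meeting in two edges, contradicting that two faces meet in at most a vertex or an edge. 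Hence $H$ is simple.

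Next I would show $H$ is a single cycle. Each vertex of $H$ is a face $f$ carrying at least one cut edge, and since $H$ is simple its $H$-degree equals the number of cut edges on the simple cycle $\partial f$, i.e.\ the number of transitions between $A$ and $B$ met while traversing $\partial f$; this number is even, so every vertex of $H$ has degree at least $2$. Thus $H$ is a simple graph with minimum degree $\ge 2$ and only $k\le 5$ edges. A short case analysis then finishes the argument: two vertex-disjoint cycles, a figure-eight, or any component containing a vertex of degree $\ge 4$ each force at least $6$ edges once loops and multiedges are excluded, so $H$ must be a single cycle, necessarily of length $k\in\{3,4,5\}$. In particular every face incident to a cut edge carries exactly two of them, which proves the first statement.

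For the second statement I would exploit this structure. Write the cycle as $f_0,f_1,\dots,f_{k-1}$ where $f_i$ and $f_{i+1}$ share the cut edge crossed between them. The two cut edges on $f_i$ create exactly two transitions between $A$ and $B$, so they split the simple cycle $\partial f_i$ into exactly one arc of $A$-vertices and one arc of $B$-vertices. Fix a component $C$, say $C\subseteq A$. The $A$-arc of $f_i$ runs between the $A$-endpoints of the two cut edges on $f_i$, and because $f_i$ and $f_{i+1}$ share a cut edge, the $A$-arc of $f_i$ ends at the very vertex (the $A$-endpoint of that shared edge) at which the $A$-arc of $f_{i+1}$ begins. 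Chaining these arcs around $H$ therefore concatenates them into a single closed walk $W$ that uses only $A$-vertices and non-cut ($A$-internal) edges and passes through the $A$-endpoints of all $k$ cut edges. Since $W$ is connected and lies in $G[A]$, all these endpoints lie in one component of $G-S$, namely $C$, and its cut endpoints are exactly the vertices of $W$.

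It remains to identify $W$ as a face of the induced embedding of $C$, and this is the step I expect to require the most care. Passing from the embedding of $G$ to the induced embedding of $C$ deletes precisely the cut edges incident to $C$ (no $G$-edge joins two distinct components on the same side of the cut, since such an edge would lie in $G-S$), and deleting an edge merges its two incident faces by gluing their boundaries at the edge's endpoints. At the $A$-endpoint of each cut edge this gluing attaches the $A$-arc of $f_i$ to the $A$-arc of $f_{i+1}$, so after all $k$ deletions the faces $f_0,\dots,f_{k-1}$ fuse and their $A$-arcs become the boundary of a single face of $C$, namely $W$; the symmetric argument with $B$-arcs handles a component $C\subseteq B$. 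The main obstacle is making this face-merging bookkeeping rigorous: tracking the \nx-pointers as the cut edges are removed and verifying that the $k$ deletions fuse the $f_i$ into one face rather than several, together with the degenerate case that a side of the cut is disconnected, which the chaining argument of the previous paragraph already rules out for the endpoints that matter.
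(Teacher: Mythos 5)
Your proposal is correct and follows essentially the same route as the paper's proof: the parity of cut edges on each facial cycle, simplicity of the dual forcing the degree argument that rules out everything but a single $k$-cycle, and then chaining the facial subpaths (your $A$-arcs) around that cycle to obtain a single closed facial walk of the component after the cut edges are deleted. The paper states the last merging step more tersely than you do, but the content is the same.
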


\begin{proof}

  Each of the faces $f_1,\dots ,f_j$ containing at least one of
  $e_1,e_2,\dots,e_k$ contains an even number of these edges, as
  otherwise an edge has two endpoints in the same bipartition class.
  Let $D$ be the graph with vertices $f_1,\dots ,f_j$ and edges
  $\{f_i,f_h\}$ for all $f_i,f_h$ that share one of
  $e_1,e_2,\dots,e_k$. Due to polyhedrality $D$ is a simple graph.  As
  there are at most 5 edges, we have immediately that the vertex
  degrees in $D$ are only 2 or 4 and it is easy to see that vertices
  with degree 4 cannot exist. So $D$ is a simple cycle (so also
  $j=k$).

  This means that each $f_i\in \{f_1,\dots ,f_k\}$ has in each of the two components containing vertices of $f_i$ exactly one facial subpath or only a single vertex
  and the union of subpaths in the same component forms a closed facial walk after the removal of $e_1,\dots ,e_k$.

\end{proof}

\begin{lemma}\label{lem:3edgecut}

  Let $G$ be a graph with a polyhedral embedding in genus $g$ and a nontrivial 3-edge-cut.

  Then there are (smaller) embedded graphs $G',G''$ with cubic vertices $v'$ in $G'$ and  $v''$ in $G''$ and polyhedral embeddings in genus $g'$, resp.\ $g''$,
  so that $g'+g''=g$ and $G=G'_{v'} \star G''_{v''}$. If $G$ is cubic, then also $G'$ and $G''$ are cubic.

\end{lemma}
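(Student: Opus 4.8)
The plan is to let $C_1,C_2$ be the two sides of the nontrivial $3$-edge-cut $\{e_1,e_2,e_3\}$, to reconstruct $G'$ and $G''$ by capping each side with a single new trivalent vertex, and then to verify polyhedrality and additivity of genus by Euler-characteristic bookkeeping. Writing $e_1=\{a,a'\}$, $e_2=\{b,b'\}$, $e_3=\{c,c'\}$ with $a,b,c$ in $C_1$, I would set $G'$ to be the induced embedding of $C_1$ together with a new vertex $v'$ joined to $a,b,c$, and symmetrically define $G''$ from $C_2$ with a new vertex $v''$ joined to $a',b',c'$. By construction, deleting $v'$ and $v''$ and reinserting the three cut edges at the rotation slots they vacate recovers $G$ with its rotation system, so $G=G'_{v'}\star G''_{v''}$ is immediate once the two pieces are shown to be well-defined polyhedral embeddings; nontriviality guarantees both pieces are strictly smaller than $G$.

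The first substantive step is to pin down the three faces meeting the cut. By Proposition~\ref{note:3cut} applied with $k=3$, the faces containing a cut edge form a $3$-cycle $f_1f_2f_3$ in the dual, so there are exactly three such faces and each contains exactly two of the cut edges; moreover in each component the endpoints of $e_1,e_2,e_3$ lie on a single face. Concretely, in $C_1$ the three arcs that $f_1,f_2,f_3$ cut out concatenate into one closed facial walk $F'$ through $a,b,c$, and this is exactly the face into which $v'$ is inserted. Joining $v'$ to the three boundary points $a,b,c$ of $F'$ splits $F'$ into three new faces and leaves every other face of $C_1$ untouched.

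Before checking polyhedrality I would record that $a,b,c$ are pairwise distinct, which is where polyhedrality of $G$ is used: since $f_1,f_2,f_3$ pairwise share a single cut edge, the face-intersection condition forces $f_i\cap f_j$ to be exactly that edge. If, say, $a=b$, then this vertex would lie on all of $f_1,f_2,f_3$, hence in $f_1\cap f_3$, which contains only the endpoints of $e_3$; this collapses to $a=b=c$, i.e.\ the trivial cut, contradicting nontriviality. With $a,b,c$ distinct, each new face around $v'$ is a simple cycle (a simple sub-arc of some $f_i$ closed up through $v'$), and I would check the intersection condition case by case: two new faces meet exactly in one edge at $v'$ (their $C_1$-arcs can meet only in the shared endpoint among $a,b,c$, inherited from $f_i\cap f_j$), a new face and an old face of $C_1$ meet in at most an edge because their $C_1$-parts sit inside $f_i\cap f$, and old faces are unchanged. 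Hence $G'$, and symmetrically $G''$, is polyhedral.

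Finally the genus additivity is pure counting. Removing the three cut edges gives $v(C_1)+v(C_2)=v(G)$, $e(C_1)+e(C_2)=e(G)-3$, and, since $f_1,f_2,f_3$ merge into one face on each side, $f(C_1)+f(C_2)=f(G)-1$; summing the two Euler formulas yields $g=g_{C_1}+g_{C_2}$ for the induced embeddings. Capping with $v'$ adds one vertex, three edges and a net of two faces, so the Euler characteristic of $G'$ equals that of $C_1$ and its genus is $g_{C_1}$; likewise $G''$ has genus $g_{C_2}$, giving $g'+g''=g$. For cubic $G$, distinctness of $a,b,c$ means each loses exactly one edge to the cut, so joining them to $v'$ restores degree $3$ everywhere and makes $v'$ cubic, so $G'$ and $G''$ are cubic. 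The main obstacle is the polyhedrality verification for the capped pieces — in particular arguing distinctness of $a,b,c$ and controlling the pairwise intersections of the three new faces — since everything else is direct bookkeeping.
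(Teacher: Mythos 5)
Your proposal follows essentially the same route as the paper: apply Proposition~\ref{note:3cut} to conclude that the three cut-edge endpoints on each side lie on a common face, cap each side with a new trivalent vertex in that face, verify polyhedrality by checking face adjacencies around the new vertices, and get $g'+g''=g$ by counting vertices, edges and faces. The only minor remark is that for non-cubic $G$ the degenerate case $a=b=c$ is excluded most directly by $3$-connectivity (as the paper does) rather than by nontriviality of the cut alone; otherwise your added details (distinctness of $a,b,c$, the explicit Euler bookkeeping) are correct elaborations of steps the paper leaves to the reader.
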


\begin{proof}
  As $G$ is 3-edge-connected, after the removal of the edge cut we
  have exactly two (embedded) components and due to
  Proposition~\ref{note:3cut} both have the three distinct (as $G$ is
  3-connected) vertices where an edge was removed in the same face. So
  we can add vertices $v'$, resp.\ $v''$ inside the face to form $G'$
  and $G''$. The fact that $g'+g''=g$ can again be shown by simply
  counting vertices, edges and faces and the fact that $G'$ and $G''$
  are polyhedrally embedded can be easily seen by checking the face
  adjacencies around the new vertices.
  
\end{proof}

\begin{lemma}\label{lem:extension}

  Let $G, G'$ be graphs with cubic vertices $v$ in $G$ and $v'$ in $G'$. 

  Then for any genus $g$ we have with $ 0\le j,j' \le g$

  \[ n_g(G_v \star G'_{v'})=\sum_{j+j'=g}(n_j(G)\cdot n_{j'}(G')). \] 

\end{lemma}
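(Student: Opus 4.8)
The plan is to establish a bijection between polyhedral embeddings of the combined graph $G_v \star G'_{v'}$ and compatible pairs of polyhedral embeddings of the two factors, then refine it by genus to obtain the convolution formula. First I would set up the correspondence: given a polyhedral embedding of $G_v \star G'_{v'}$, the three connecting edges $\{a,a'\}, \{b,b'\}, \{c,c'\}$ form a nontrivial $3$-edge-cut, so Lemma~\ref{lem:3edgecut} applies and yields embedded graphs $G', G''$ (in the notation of that lemma) obtained by contracting each side back to a single cubic vertex, with genera summing to the total genus and with $G'_{v'} \star G''_{v''}$ recovering the original embedding. Conversely, the $\star$ construction takes any pair of polyhedral embeddings of $G$ and $G'$ and produces an embedding of $G_v \star G'_{v'}$; I would verify this result is again polyhedral by the local face-adjacency check around the new edges, exactly as in the proofs of Lemmas~\ref{lem:3edgecut} and the genus-additivity remark preceding Proposition~\ref{note:3cut}.

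The key point is that these two operations are mutually inverse and genus-additive. Deleting the three edges from an embedding of $G_v \star G'_{v'}$ and reinserting the apex vertices $v,v'$ (in the unique face where their neighbours lie, guaranteed by Proposition~\ref{note:3cut}) gives back precisely the pair of factor embeddings from which it could have been built; and the formulas $v(G_v \star G'_{v'})=v(G)+v(G')-2$, $e(G_v \star G'_{v'})=e(G)+e(G')-3$, $f(G_v \star G'_{v'})=f(G)+f(G')-3$ stated in the excerpt force the genus to add, i.e. an embedding in genus $g$ corresponds to a pair in genera $(j,j')$ with $j+j'=g$. Since $n_g$ counts embeddings with the rotation at one vertex fixed (mirror images identified), I must make sure the bijection respects this normalization: fixing the rotation at a single vertex of $G_v \star G'_{v'}$ should correspond to fixing the rotation at one vertex in each factor. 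The rotation at the apex vertex on one side is determined by the cyclic order of $a,b,c$, which the $\star$ operation preserves, so a consistent global choice of fixed vertex can be made on both sides without overcounting or introducing spurious mirror pairs.

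Summing over the possible genus splits then gives the formula directly: the embeddings of $G_v \star G'_{v'}$ in genus $g$ partition according to the genus $j$ of the $G$-side and $j' = g-j$ of the $G'$-side, and within each part the count is the product $n_j(G)\cdot n_{j'}(G')$ by the bijection, yielding
\[ n_g(G_v \star G'_{v'})=\sum_{j+j'=g}\bigl(n_j(G)\cdot n_{j'}(G')\bigr). \]

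I expect the main obstacle to be the bookkeeping around the mirror-image and fixed-rotation conventions rather than the topology. The genus-additivity and the structural decomposition are handed to us by Lemma~\ref{lem:3edgecut} and Proposition~\ref{note:3cut}, so the topological content is essentially in place. The delicate step is checking that the normalization implicit in the definition of $n_g$ (mirror images identified, one rotation fixed) pulls back and pushes forward cleanly across the cut: one must confirm that a mirror image of the combined embedding corresponds to mirroring \emph{both} factors simultaneously, so that identifying mirror images upstairs matches identifying them in the pair downstairs, and that no pair of distinct factor-embeddings can yield the same embedding of $G_v \star G'_{v'}$. Once this identification is pinned down, the counting is a routine partition of a set into fibres and the product formula follows.
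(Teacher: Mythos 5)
Your proposal is correct and follows essentially the same route as the paper: compose pairs of factor embeddings via the $\star$ operation, check polyhedrality locally, get genus additivity from the vertex/edge/face counts, and show every polyhedral embedding of $G_v \star G'_{v'}$ decomposes uniquely back into such a pair, with the mirror-image normalization pinned down by fixing the facial walk along the cut edges. The only cosmetic difference is that you route the decomposition direction through Lemma~\ref{lem:3edgecut} and Proposition~\ref{note:3cut}, while the paper redoes that facial-walk argument inline to track exactly which rotations at the reinserted apex vertices are recovered.
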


\begin{figure}
  	\centering
\includegraphics[width=0.8\textwidth]{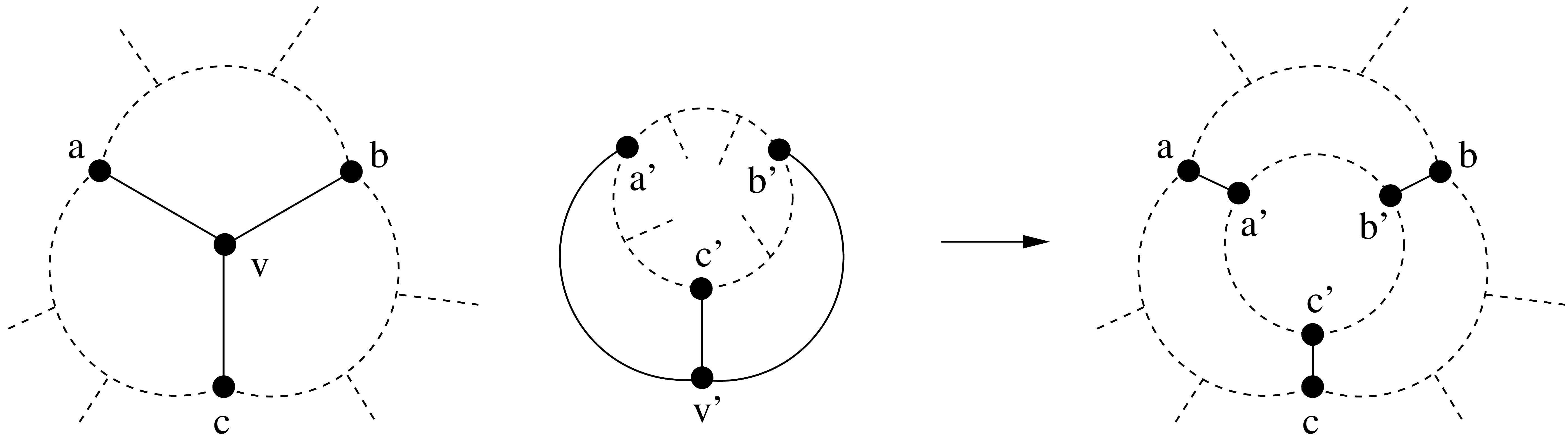}
\caption{The operation $G_v \star G'_{v'}$.}\label{fig:extend}
\end{figure}

\begin{proof}

  Let $a,b,c$, resp.\ $a',b',c'$ be the neighbours of $v$ in $G$,
  resp.\ $v'$ in $G'_{v'}$, named in a way that the edges $\{a,a'\},\{b,b'\}$
  and $\{c,c'\}$ are inserted when forming $G_v \star G'_{v'}$.

  Let now $j,j'$ be so that $0\le j,j' \le g$ and
  $j+j'=g$. Furthermore let $I_G$ be a polyhedral embedding of $G$ of
  genus $j$ with the rotation around $v$ equal to $a,b,c$ and let
  $I_{G'}$ be a polyhedral embedding of $G'$ of genus $j'$ with the
  rotation around $v'$ equal to $a',c',b'$. Inserting the edges
  $\{a,a'\},\{b,b'\}$ and $\{c,c'\}$ in the rotational order around
  $a,b,c,a',b',c'$ where the deleted edges in $I_G$ and $I_{G'}$ have
  been -- see Figure~\ref{fig:extend} -- we get an embedding of $G_v  \star G'_{v'}$
  with genus $j+j'=g$ and by checking face adjacencies
  it is easily seen to be polyhedral.

  This way all polyhedral embeddings of $G$ with genus $j$ can be
  combined with all polyhedral embeddings of $G'$ with genus $j'$ and
  doing this for all possible $j,j'$ we get $n_g(G_v \star G'_{v'})\ge \sum_{j+j'=g}(n_j(G)\cdot n_{j'}(G'))$. Note that two different
  embeddings for $G$ (and analogously $G'$) lead to different
  embeddings of $G_v \star G'_{v'}$, as the rotation around $v$ is
  always the same, so the difference is at vertices also present in
  $G_v \star G'_{v'}$.

  What remains to be shown is that each polyhedral embedding of $G_v  \star G'_{v'}$ is of this form.
Assume a polyhedral embedding of genus $g$ of $G_v  \star G'_{v'}$ to be given. Choose that of the two equivalent embeddings
  where in the face containing the edges $\{a,a'\},\{b,b'\}$ the facial walk is $b',b,\dots ,a,a',\dots$ (in the mirror image it is $b,b',\dots ,a',a,\dots$).
Then in the face containing  $\{b,b'\},\{c,c'\}$   the facial walk is $c',c,\dots ,b,b',\dots$ and we get that in the component containing $a,b,c$ the order in
  the face is $a,c,b$, so that if we insert a vertex into the face and connect it to the three vertices by assigning the reverse order, we get an embedding
  of $G$ with the order around the new vertex $a,b,c$. Analogously we get an embedding of $G'$ with rotational order around the new vertex $a',c',b'$, so that
  the embedding is in fact of the form counted in the formula.
  
  \end{proof}

The following corollary is a direct consequence of Lemma~\ref{lem:extension} and Theorem~\ref{thm:polyhedra}, which implies that for 3-connected planar graphs $G'$ we have
$n_{0}(G')=1$  and $n_{j'}(G')=0$ for $j'>0$.

\begin{corollary}\label{cor:equalnumber}

  Let $G, G'$ be graphs with cubic vertices $v$ in $G$ and $v'$ in $G'$ and let $G'$ be a 3-connected planar graph.

  Then for all $g$ we have  $ n_g(G_v \star G'_{v'})= n_g(G)$.

\end{corollary}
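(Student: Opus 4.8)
The plan is to combine Lemma~\ref{lem:extension} with Theorem~\ref{thm:polyhedra} and simply evaluate the summation. The corollary states that $n_g(G_v \star G'_{v'}) = n_g(G)$ when $G'$ is a 3-connected planar graph, and the natural route is to plug the embedding-counting data for a 3-connected planar graph into the product formula from Lemma~\ref{lem:extension}.

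First I would recall what Theorem~\ref{thm:polyhedra} tells us about a 3-connected planar graph $G'$. That theorem asserts that such a graph has (up to mirror images) a unique polyhedral embedding and that this embedding is in the plane, i.e.\ in genus $0$. Translating this into the notation $n_{j'}(G')$, where mirror images are identified, this means precisely $n_0(G') = 1$ and $n_{j'}(G') = 0$ for every $j' > 0$. This is exactly the statement flagged in the sentence preceding the corollary, so I may take it as given.

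Next I would apply Lemma~\ref{lem:extension}, which gives
\[ n_g(G_v \star G'_{v'}) = \sum_{j+j'=g} \bigl(n_j(G)\cdot n_{j'}(G')\bigr). \]
In this sum over pairs $(j,j')$ with $j+j'=g$ and $0 \le j,j' \le g$, every term with $j' > 0$ vanishes because $n_{j'}(G') = 0$. Hence only the term with $j' = 0$ survives, which forces $j = g$ and contributes $n_g(G)\cdot n_0(G') = n_g(G)\cdot 1 = n_g(G)$. Therefore $n_g(G_v \star G'_{v'}) = n_g(G)$ for every $g$, as claimed.

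There is essentially no genuine obstacle here, since this is a purely formal collapse of the convolution once one input is a delta supported at genus $0$; the only point requiring any care is making sure the hypotheses of Lemma~\ref{lem:extension} are met, namely that both $G$ and $G'$ have the required cubic vertices $v$ and $v'$ (which is part of the corollary's hypothesis) and that the reading of Theorem~\ref{thm:polyhedra} into the language of $n_{j'}$ correctly accounts for the identification of mirror images. Since Lemma~\ref{lem:extension} already works with mirror images identified, and Theorem~\ref{thm:polyhedra} also counts up to mirror images, the two conventions match and no factor of two intrudes.
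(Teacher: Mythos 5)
Your proposal is correct and follows exactly the route the paper intends: the paper states the corollary as a direct consequence of Lemma~\ref{lem:extension} together with the observation from Theorem~\ref{thm:polyhedra} that $n_0(G')=1$ and $n_{j'}(G')=0$ for $j'>0$, which is precisely the collapse of the convolution you carry out. You have merely written out the one-line evaluation of the sum that the paper leaves implicit.
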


\begin{figure}
  	\centering
\includegraphics[width=0.9\textwidth]{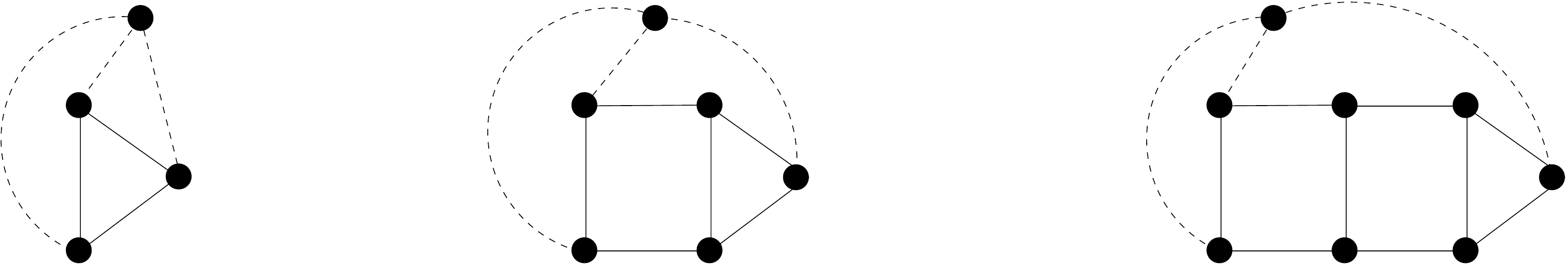}
\caption{3-connected plane graphs, so that after the removal of a vertex only pairwise non-isomorphic graphs with only triangles and quadrangles as bounded faces remain. }\label{fig:triangle2}
\end{figure}

\begin{lemma}\label{lem:triangle2}
  Let $G$ be a cubic graph with $k$ different (isomorphic or non-isomorphic) polyhedral embeddings of some genus $g$.
  Then there is a graph $G'$ that has $k$ different non-isomorphic polyhedral embeddings of genus $g$
\end{lemma}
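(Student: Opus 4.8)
The plan is to rigidify $G$ by attaching, at each of its vertices, a distinct three-connected planar gadget, chosen so that the resulting graph $G'$ has no nontrivial automorphism, while Corollary~\ref{cor:equalnumber} guarantees that the number of different polyhedral embeddings of genus $g$ is left unchanged. Concretely, I would take the gadgets to be the three-connected plane graphs of Figure~\ref{fig:triangle2}: pairwise non-isomorphic graphs $H_1,H_2,\dots$ with a distinguished cubic vertex $w_i$, such that each $H_i-w_i$ is rigid (has trivial automorphism group) and has only triangles and quadrangles as bounded faces. For each vertex $v_i$ of $G$ I attach a different gadget $H_i$ at $w_i$ by iterating the star product over all vertices of $G$, obtaining $G'$.

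Since each $H_i$ is three-connected and planar, Theorem~\ref{thm:polyhedra} gives $n_0(H_i)=1$ and $n_j(H_i)=0$ for $j>0$, so by Corollary~\ref{cor:equalnumber} every single star product preserves the count $n_g$; iterating yields $n_g(G')=n_g(G)=k$. Moreover Lemma~\ref{lem:extension} identifies these $k$ embeddings of $G'$ as the extensions $P_1',\dots,P_k'$ of the $k$ given embeddings $P_1,\dots,P_k$ of $G$ by the unique plane embedding of each gadget. Contracting the gadgets back to single vertices recovers $P_i$ from $P_i'$, so $P_i'=P_j'$ holds exactly when $P_i=P_j$, and $P_i'=(P_j')^{-1}$ exactly when $P_i=P_j^{-1}$.

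It remains to prove that the $P_i'$ are pairwise non-isomorphic, for which it suffices to show $\mathrm{Aut}(G')=1$. Any automorphism $\phi$ of $G'$ preserves the decomposition of $G'$ along its nontrivial three-edge-cuts, whose three-connected atoms are precisely the pieces $H_i-w_i$; since the $H_i$ are pairwise non-isomorphic, $\phi$ fixes each atom setwise, and since each $H_i-w_i$ is rigid, $\phi$ restricts to the identity on it. As the atoms cover all vertices of $G'$, this forces $\phi=\mathrm{id}$. With $\mathrm{Aut}(G')=1$, the only isomorphisms between embeddings of $G'$ are the identity and the global mirror, so $P_i'$ can be isomorphic to $P_j'$ only if $P_i'\in\{P_j',(P_j')^{-1}\}$; by the previous paragraph this means $P_i\in\{P_j,P_j^{-1}\}$, i.e.\ $i=j$ because the $P_i$ were assumed pairwise \emph{different} (neither identical nor mirror images). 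Hence $G'$ has $k$ pairwise non-isomorphic polyhedral embeddings of genus $g$.

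The hard part will be the recognizability step, namely justifying that every automorphism of $G'$ respects the gadget decomposition rather than mixing gadget edges with the inter-gadget (original $G$-) edges. This is exactly the role of the special shape of the gadgets in Figure~\ref{fig:triangle2}: their restriction to triangular and quadrangular faces, together with three-connectivity and pairwise non-isomorphism, makes them the uniquely identifiable dense atoms of the three-edge-cut structure of $G'$, so that the rigidity argument applies. The remaining points — genus additivity under $\star$ and the face-adjacency checks showing the extended embeddings are polyhedral — are the routine bookkeeping already carried out in Lemma~\ref{lem:extension}.
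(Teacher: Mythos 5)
Your construction is exactly the paper's (replace every vertex of $G$ by a distinct $3$-connected planar gadget whose bounded faces are triangles and quadrangles, and use Corollary~\ref{cor:equalnumber} to keep $n_g$ unchanged), but your argument for non-isomorphism of the resulting embeddings takes a different route from the paper's, and as written it has a real gap. The paper never makes $G'$ rigid as an abstract graph. It only needs that an \emph{embedding} isomorphism stabilizes each gadget --- which is immediate because embedding isomorphisms preserve faces and the gadget faces are the only faces of size at most $5$ --- and then argues via rotations: two embeddings that are neither equal nor mirror images must disagree on the rotation state of some gadget while agreeing on an adjacent one, and since the isomorphism fixes the inter-gadget edge joining them, it must reverse the cyclic order of the three attachment vertices on the boundary of one of these two gadgets while fixing one of the three; this interchanges two neighbouring gadgets, contradicting stabilization. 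No rigidity of the gadgets is used or needed.

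Your route requires $\mathrm{Aut}(G')=1$, and the step you yourself flag as ``the hard part'' is justified by a claim that is false: the atoms of the decomposition of $G'$ along nontrivial $3$-edge-cuts are \emph{not} the pieces $H_i-w_i$. The gadgets necessarily contain triangles (their bounded faces are triangles and quadrangles), and every triangle in a cubic graph spans a cyclic $3$-edge-cut, so the decomposition refines strictly past the gadgets; the sub-atoms of distinct gadgets may well be isomorphic, so pairwise non-isomorphism of the $H_i$ no longer pins anything down. The step can be repaired by a different observation: every gadget edge lies on a cycle of length at most $4$ (each edge of $(G_i)_{v_i}$ bounds a triangle or quadrangle), whereas no inter-gadget edge lies on a cycle of length at most $5$ (such a cycle projects to a non-backtracking closed walk of length at least $3$ in the simple graph $G$ and must use at least one gadget edge in each gadget it visits, giving length at least $6$); hence the gadget edge set, and therefore each gadget, is invariant under every automorphism of $G'$. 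You also assume the gadgets of Figure~\ref{fig:triangle2} are rigid, which the paper neither claims nor needs; if you keep your strategy you must actually exhibit arbitrarily many rigid gadgets of the required type. With both repairs your argument does go through and is conceptually clean, but the paper's embedding-level argument is shorter and avoids both extra hypotheses.
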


\begin{proof}
  
  If the vertices of $G$ are $w_1,\dots ,w_n$, we can choose $n$ 3-connected plane cubic graphs $G_1,\dots ,G_n$ and vertices $v_1,\dots ,v_n$, so that for $1\le i \le n$,  $v_i$ is a vertex of $G_i$
  and the graphs $(G_i)_{v_i}$ are  pairwise non-isomorphic graphs  with only triangles and quadrangles as bounded faces (see e.g. Figure~\ref{fig:triangle2}).
  Now we can iteratively apply the star operation to replace each $w_i$ by $(G_i)_{v_i}$ to form a graph $G'$.
  According to Corollary~\ref{cor:equalnumber}, $G'$ also has
  $k$ polyhedral embeddings on genus $g$. As all faces in one of
  these embeddings that are not contained in a subgraph $(G_i)_{v_i}$ are
  at least hexagons, each isomorphism between two such embeddings must stabilize each subgraph as well as the set of original edges -- that is edges not in one of the $(G_i)_{v_i}$.
   As the subgraphs $(G_i)_{v_i}$ $1\le i \le n$ are equivalence classes in the sense of Proposition~\ref{note:equclass}, there are just two possible rotations inside the subgraphs and
  an embedding isomorphism between two different embeddings $I_1$, $I_2$ must therefore have an original edge with endpoints in subgraph $(G_i)_{v_i}$  and $(G_j)_{v_j}$
  where at the vertex in $(G_i)_{v_i}$ the rotation is as in  $I_1$
  and at  the vertex in $(G_j)_{v_j}$ it is different from the one in $I_1$. This means that the embedding isomorphism interchanges the other two subgraphs sharing an original edge with $(G_j)_{v_j}$  compared to $I_1$, which is impossible as
  the subgraphs are stabilized. So there is no embedding isomorphism mapping $I_1$ to $I_2$.

\end{proof}

The smallest cubic graph with 8 different (but isomorphic) embeddings on the
torus is the Heawood graph. The construction in the proof of
Lemma~\ref{lem:triangle2} with replacing all vertices is of course designed for simplicity and not
for a minimal number of vertices. Nevertheless a smallest cubic
graph with 8 non-isomorphic embeddings on the torus -- which has 22 vertices -- can be obtained from the Heawood graph by applying the star operation.

Lemma~\ref{lem:extension} now has the following direct consequences:

\begin{corollary}\label{cor:help} ~\\
 
  \begin{description}

   \item[(a)] If there is a cubic graph $G$ with $\gen(G)=g$ and the smallest genus of a polyhedral embedding is $g'$, then there is also a cubic graph with genus at most $2g$ and the smallest genus of a polyhedral embedding is $2g'$.
    
    \item [(b)]
      If there is a cubic graph $G$ with polyhedral embeddings in genus $g_1<g_2<\dots <g_m$, and a cubic graph $G'$ with polyhedral embeddings in genus  $g'_1<g'_2<\dots <g'_k$,
      then there is also a cubic graph, so that for all $1\le i \le m$ and $1\le j\le k$ there is a polyhedral embedding in genus $g_i+g'_j$.

      \item[(c)] If there is a cubic graph $G$ with $p$ (up to mirror
        images) different polyhedral embeddings in genus $g$ and a cubic graph $G'$ with $p'$ (up to mirror
        images) different polyhedral embeddings in genus $g'$, then
        there is a cubic graph with $p\cdot p'$ 
        different polyhedral embeddings in genus $g+g'$.

      \end{description}

\end{corollary}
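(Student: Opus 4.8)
The plan is to obtain all three items as essentially immediate consequences of the convolution formula in Lemma~\ref{lem:extension}, namely $n_h(G_v \star G'_{v'}) = \sum_{j+j'=h} n_j(G)\, n_{j'}(G')$, by choosing the two factors appropriately and reading off a single summand. Since deleting a cubic vertex and reconnecting the three resulting stubs by a perfect matching leaves every degree equal to $3$, the graph $G_v \star G'_{v'}$ is again cubic whenever $G$ and $G'$ are, so in each case the construction does produce a cubic graph as required by the statement.

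For item (a) I would take two copies of $G$ and form $G_v \star G_v$. Writing $g'$ for the smallest genus of a polyhedral embedding of $G$, we have $n_j(G)=0$ for $j<g'$ and $n_{g'}(G)>0$. The formula then gives $n_h(G_v \star G_v)=\sum_{j+j'=h} n_j(G)\, n_{j'}(G)$. For $h<2g'$ every summand has $j<g'$ or $j'<g'$ and hence vanishes, while for $h=2g'$ the summand $j=j'=g'$ equals $n_{g'}(G)^2>0$; thus the smallest genus of a polyhedral embedding of $G_v \star G_v$ is exactly $2g'$. For the genus bound I would invoke the inequality $\gen(G_v \star G'_{v'})\le \gen(G)+\gen(G')$ established in the discussion preceding Lemma~\ref{lem:3edgecut}, which here yields $\gen(G_v \star G_v)\le 2g$.

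Items (b) and (c) are even more direct and need only that all summands of the convolution are nonnegative. For (b) I would form $G_v \star G'_{v'}$; given indices $i,j$, the hypotheses give $n_{g_i}(G)>0$ and $n_{g'_j}(G')>0$, so the single summand with first index $g_i$ and second index $g'_j$ already shows $n_{g_i+g'_j}(G_v \star G'_{v'})>0$, i.e.\ a polyhedral embedding in genus $g_i+g'_j$ exists. For (c), where $n_g(G)=p$ and $n_{g'}(G')=p'$, the summand with indices $g$ and $g'$ contributes $p\cdot p'$, whence $n_{g+g'}(G_v \star G'_{v'})\ge p\cdot p'$.

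The only point needing slight care is the \emph{exactness} in (a): one must argue both that no polyhedral embedding occurs below genus $2g'$ (using minimality of $g'$ for $G$, so the relevant summands vanish) and that one does occur at genus $2g'$, and then \emph{separately} bound the overall minimum genus $\gen$ by $2g$ via the additive inequality for the star operation, since the convolution formula controls only the polyhedral spectrum and not $\gen$. Parts (b) and (c) present no real obstacle beyond selecting the correct term of the sum.
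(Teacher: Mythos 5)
Your proposal is correct and follows essentially the same route as the paper: all three parts are read off from the convolution formula of Lemma~\ref{lem:extension} applied to $G_v\star G_v$ for (a) and to $G_v\star G'_{v'}$ for (b) and (c), with the genus bound in (a) coming from the additive inequality $\gen(G_v\star G'_{v'})\le\gen(G)+\gen(G')$ established before Lemma~\ref{lem:3edgecut}. You merely spell out the vanishing of the summands below genus $2g'$ more explicitly than the paper does.
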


\begin{proof}

  Part (a) is a consequence of the formula in Lemma~\ref{lem:extension} applied to  $G_v\star G_v$ for an arbitrary vertex $v$ in $G$.
  By Lemma~\ref{lem:extension} the smallest possible polyhedral embedding of $G_v\star G_v$ is in genus $2g'$ while $\gen(G_v\star G_v)\le 2~ \gen(G)$.

  Part (b) and (c) are again a direct consequence of the formula in Lemma~\ref{lem:extension} applied to  $G_v\star G'_{v'}$ for arbitrary vertices $v$ in $G$ and $v'$ in $G'$.

\end{proof}

So in order to prove Theorem~\ref{thm:existence}, we just need some graphs that can be used as initial graphs for Corollary~\ref{cor:help}.

\medskip

\subsection{Some special graphs}
~\\
We will write $G \star G'$ as a notation for a graph where arbitrary vertices $v$ in $G$ and $v'$ in $G'$ can be taken to form $G_v \star G'_{v'}$.
The Heawood graph $H$ has $8$ different (but isomorphic) embeddings on the torus. With Corollary~\ref{cor:help} (c) this implies that for each $g$ 
the graph $(\dots (H \star H)\star H)\star \dots \star H)$
where the star operation is applied $g$ times has a total of $8^g$ polyhedral embeddings in genus $g$, which implies  the first two parts of Theorem~\ref{thm:existence}~(c). The last part follows with
Lemma~\ref{lem:triangle2}.

The smallest cubic graphs that have polyhedral embeddings with
different genera have 26 vertices. There are $4$ of them and they all have one polyhedral embedding in the torus and two in genus 2.
These graphs can be found in the
graph database {\em HoG}~\cite{HoG} when searching for the keyword
\verb+pol_embed+. One of these graphs is shown with an embedding of
genus 1 in Figure~\ref{fig:cubic_pol_emb_on_two_genera_genus_1} and
with two embeddings of genus 2 in
Figure~\ref{fig:cubic_pol_emb_on_two_genera_genus_2}. There are no other polyhedral embeddings of this graph.
With Corollary~\ref{cor:help} (b) this can now be used to show that for each $k$ there is a cubic graph with embeddings in each genus $k\le g \le 2k$, which implies Theorem~\ref{thm:existence}~(b).

\begin{figure}
  	\centering
 
   \includegraphics[width=0.4\textwidth]{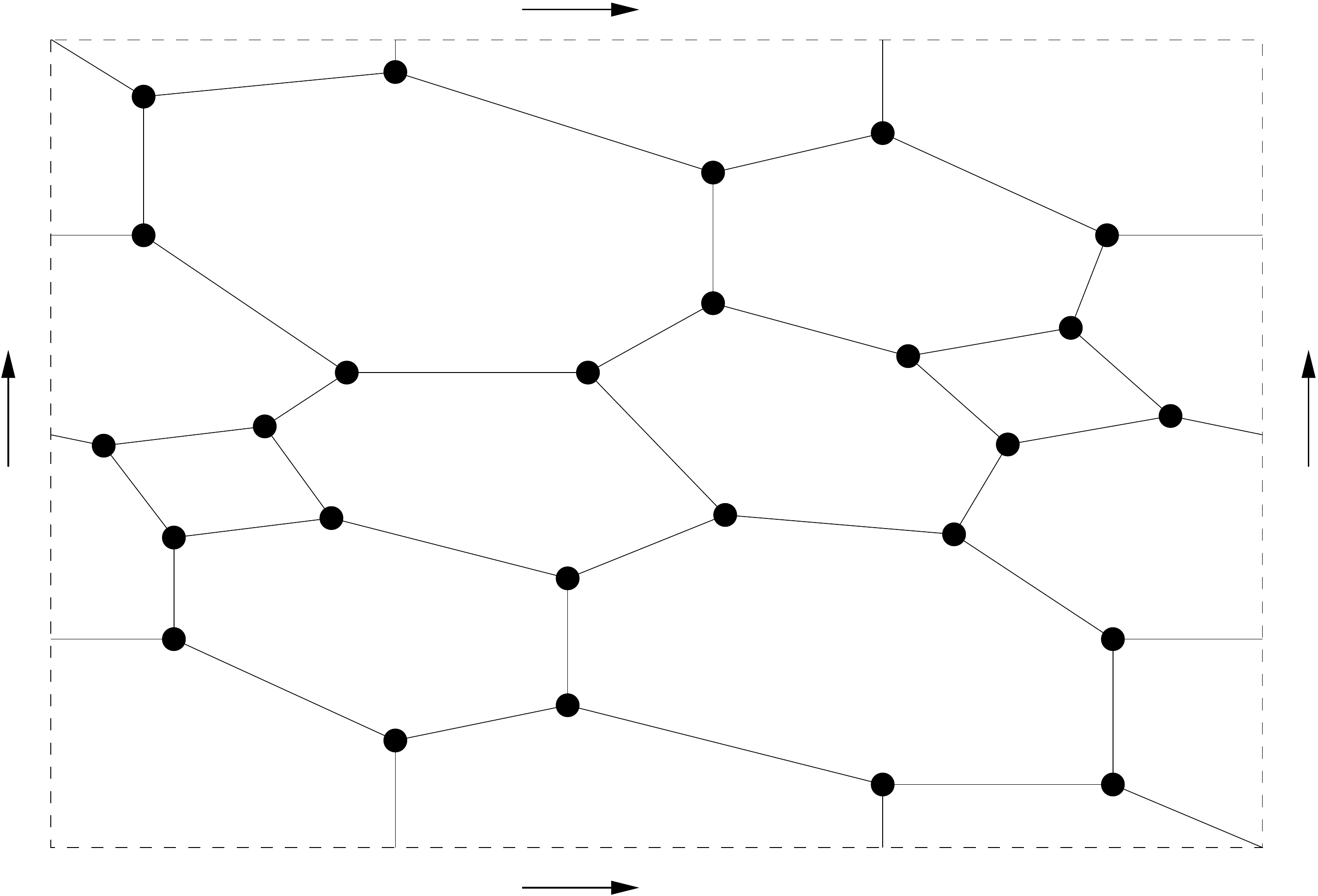}

\caption{The polyhedral embedding of genus 1 for one of the smallest cubic graphs on 26 vertices that have polyhedral embeddings of different genera. As usual, opposite sides of the rectangle need to be identified as specified by the arrows.}\label{fig:cubic_pol_emb_on_two_genera_genus_1}
\end{figure}

\begin{figure}
  	\centering
 
    \includegraphics[width=0.85\textwidth]{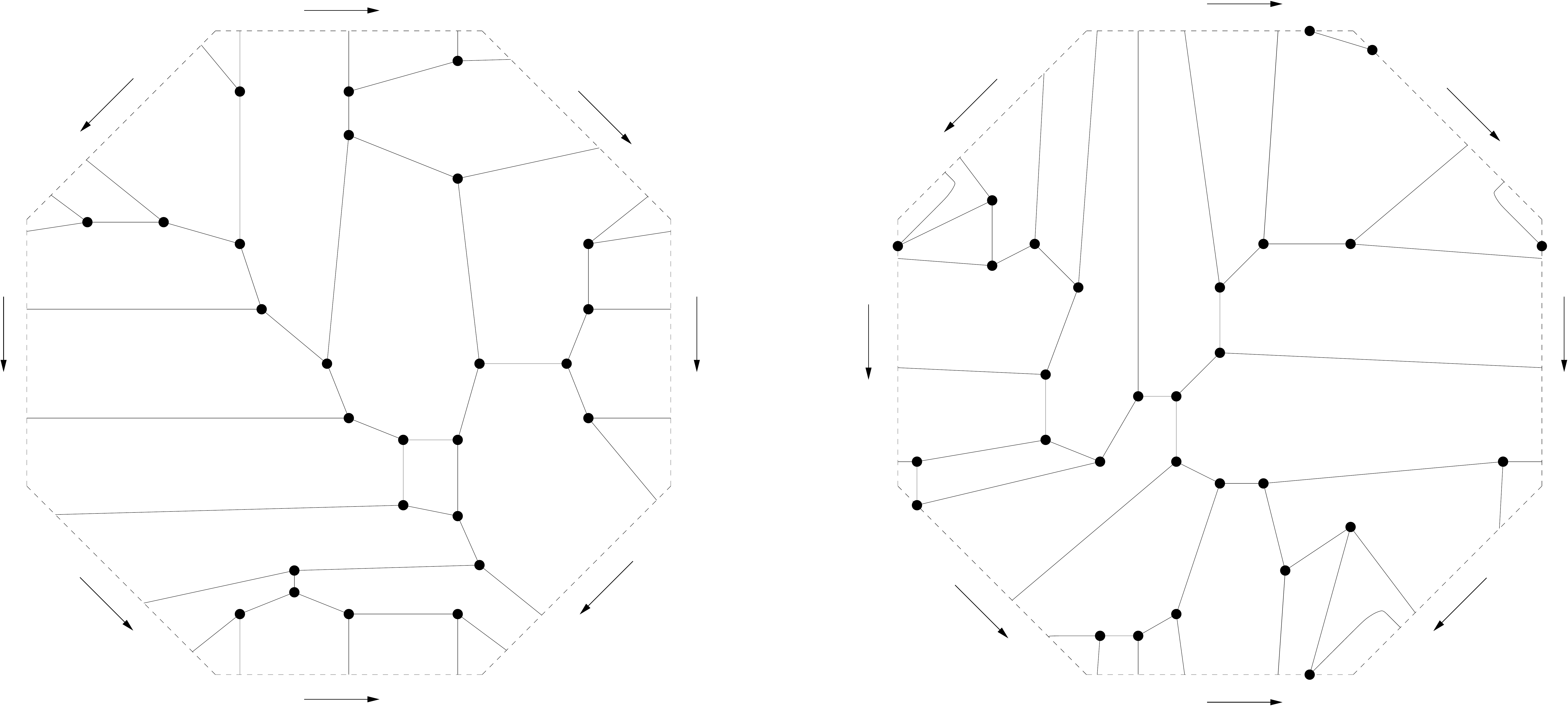}
 
\caption{The two polyhedral embeddings of genus 2 of the graph in Figure~\ref{fig:cubic_pol_emb_on_two_genera_genus_1}. Again opposite sides have to be identified.}\label{fig:cubic_pol_emb_on_two_genera_genus_2}
\end{figure}

The numbers in Table~\ref{tab:all} and Table~\ref{tab:allpolembed} together with the fact that for different embeddings of a given graph, the
number of faces decreases as the genus of the embedding
increases, suggest that it is more difficult to
get polyhedral embeddings for larger genera. One might even be tempted to
expect that if a graph has a polyhedral embedding, it also has one of minimum genus.
This is however not true and the smallest cubic counterexamples
have 28 vertices. There are four of these graphs  -- two have genus 1 and only one polyhedral embedding and that is in genus 2 and the other two have genus 2
and each has two polyhedral embeddings -- both in genus 3. One of these counterexamples is
shown in Figure~\ref{fig:cubic_pol_emb_not_min_genus_genus_2} with a
minimum genus embedding, and in
Figure~\ref{fig:cubic_pol_emb_not_min_genus_genus_3} with the unique polyhedral
embedding. Also these graphs can be found in the
graph database {\em HoG} \cite{HoG} when searching for the keyword
\verb+pol_embed+. 
With Corollary~\ref{cor:help} (a) this can now be used to show that for each $k\ge 1$ there is a cubic graph with genus $k$ whose smallest polyhedral embedding is in genus $2k$, which implies Theorem~\ref{thm:existence}~(d).

\begin{figure}
  	\centering
 
  \includegraphics[width=0.4\textwidth]{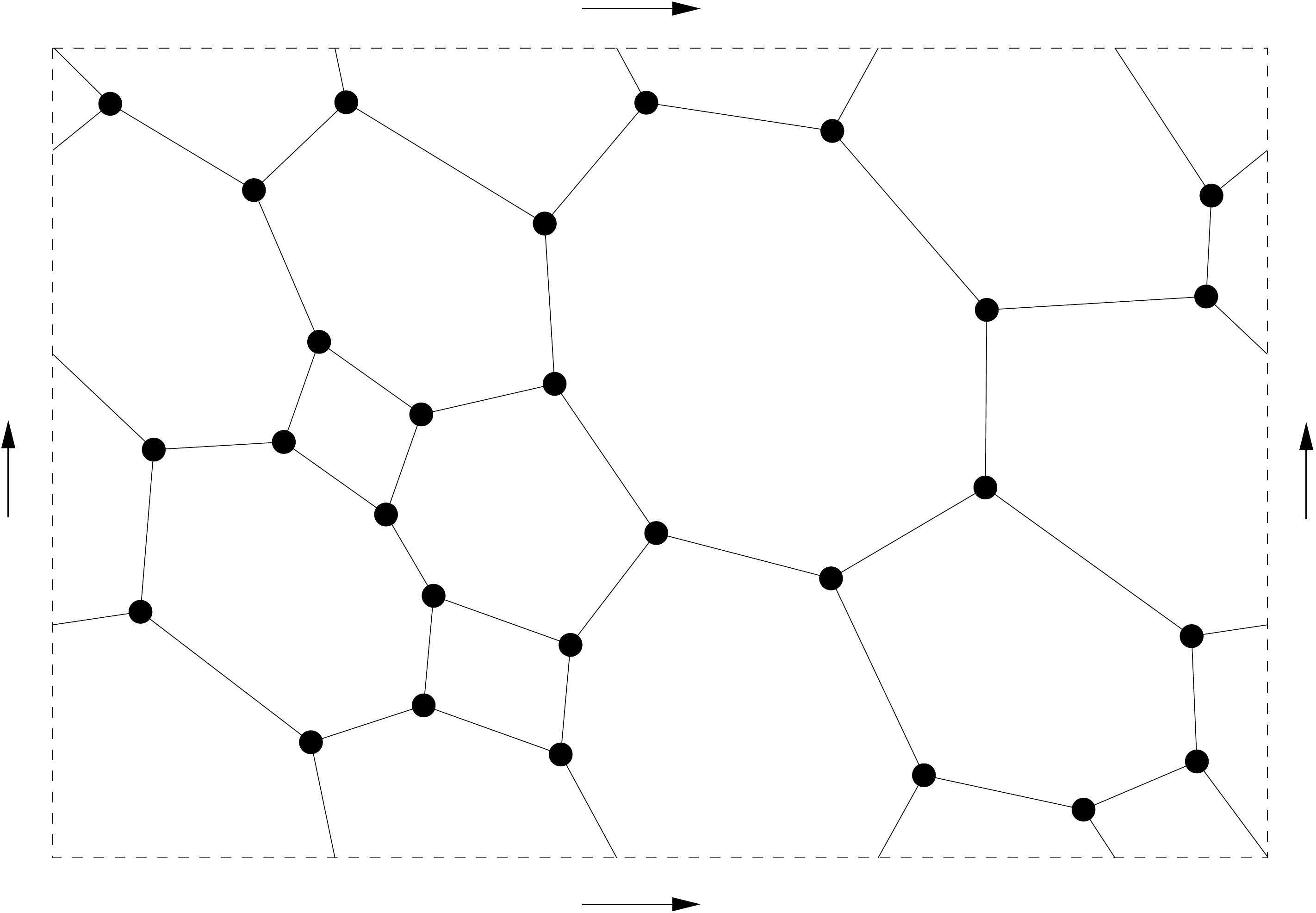}

\caption{A minimum genus embedding for one of the smallest cubic
  graphs on 28 vertices that have a polyhedral embedding, but not for
  their minimum genus. Again opposite sides have to be identified.}\label{fig:cubic_pol_emb_not_min_genus_genus_2}
\end{figure}

\begin{figure}
  	\centering
 
    \includegraphics[width=0.4\textwidth]{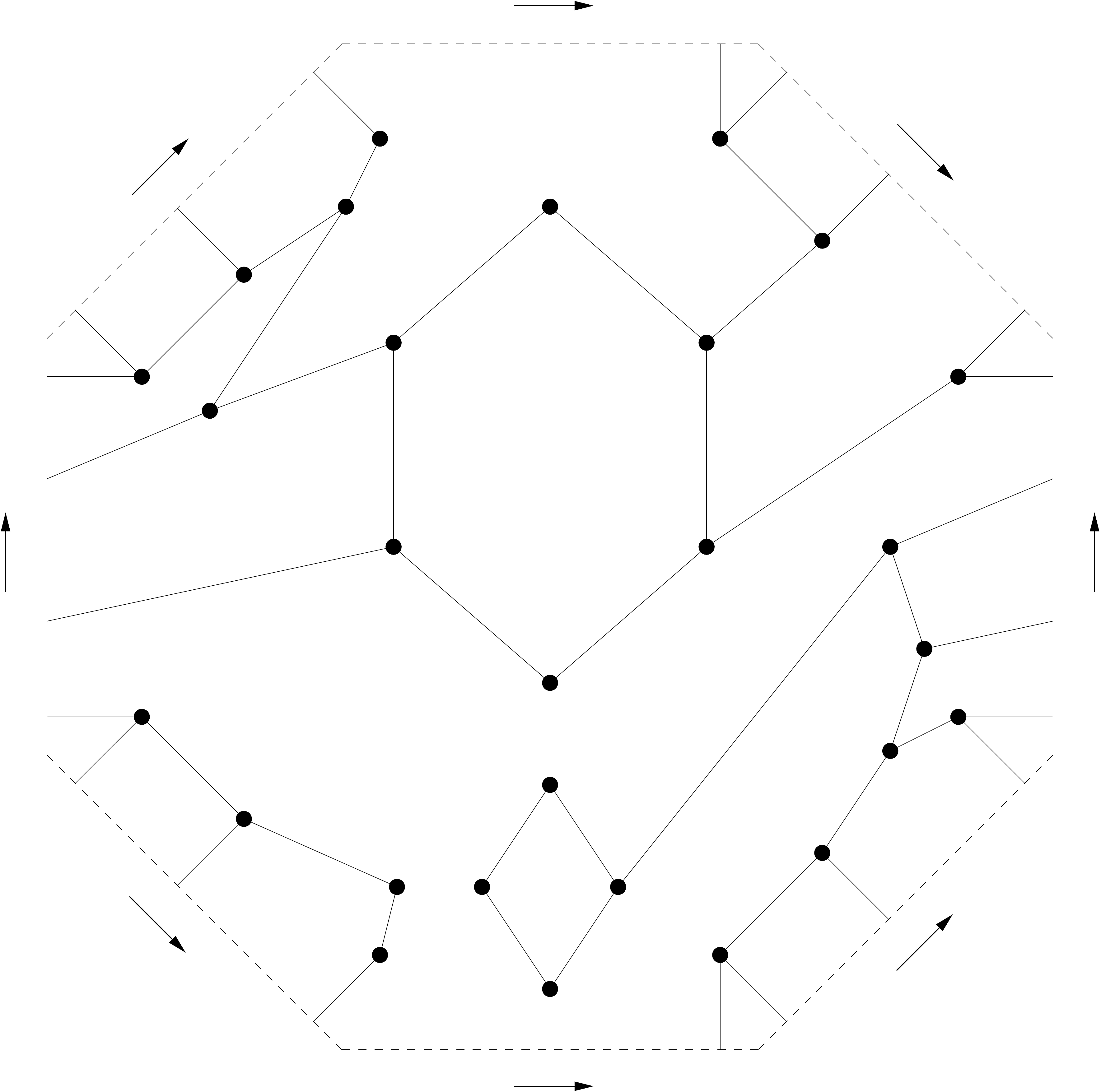}

\caption{A polyhedral embedding of the graph in Figure~\ref{fig:cubic_pol_emb_not_min_genus_genus_2}.   Again opposite sides have to be identified.}\label{fig:cubic_pol_emb_not_min_genus_genus_3}
\end{figure}

\section{Further research}

We have shown that there are cubic graphs with arbitrarily many polyhedral embeddings -- in the same genus as well as in arbitrarily many different genera --
but in the constructions, the genera of the graphs grow with the number of embeddings. In fact at least for embeddings in the same genus, the genus
of the embeddings must grow, as  -- as shown by Mohar and Robertson \cite{num_embed_bound} -- for each genus there exists an upper bound $\zeta (2g)$ on the number
of polyhedral embeddings of a (not necessarily cubic) graph in that surface of genus $g$. The factor 2 comes from the fact that they define $\zeta()$ as a function of the Euler genus.
They also show that $\zeta (2g)\ge 48^{g}$ by using $g$ copies of $K_7$ attached to a plane triangulation.
While in  \cite{num_embed_bound} the result is of an asymptotic nature, in \cite{num_embed_bound_fw4torus} Robertson, Zha, and Zhao  give $3$ as an exact bound
and characterize graphs with more than one toroidal embedding -- but under the stronger condition of facewidth at least 4. It would be very interesting to give at least a good approximation of
the upper bound in the case of polyhedral embeddings of cubic graphs and small genus $g$.

\begin{description}
\item[(a)] What is (a good approximation of) the upper bound $\zeta_3(2g)$  for the number of polyhedral embeddings of a cubic graph in a surface of genus $g$?
\end{description}

For the plane we have $\zeta_3(0)=1$ if -- as usual -- we identify mirror
images. The result proven in this paper implies that $\zeta_3(g)\ge 8^g$. We tested all cubic graphs
on up to 28 vertices. The smallest graph with a polyhedral embedding in genus 1 has 14 vertices and -- as mentioned before -- already for 14 vertices there is a graph -- the Heawood graph $H$ -- with
8 polyhedral embeddings in the torus. Note that this is in fact the dual of the graph $K_7$ embedded on the torus, which was used in  \cite{num_embed_bound}. For larger vertex numbers up
to 28 there are many more graphs with 8 polyhedral embeddings on the
torus -- but no graph with more than 8 such embeddings. For genus 2 we
have shown that there are cubic graphs with at least $8^2=64$
embeddings. The (unique) smallest cubic graph with $64$ polyhedral
embeddings in genus 2 has 26 vertices and is $H\star H$. For 28 vertices there are more
such graphs, but none with more polyhedral embeddings. As the smallest cubic
graph with a polyhedral embedding in genus 2 has 24 vertices, these
numbers are too small to draw conclusions.

We know by Corollary~\ref{cor:equalnumber} that by forming $H\star G$ for a cubic 3-connected planar graph $G$ (or even repeating this process)
gives more cubic graphs with 8 polyhedral embeddings in the torus, but the reason for the comparatively large number of embeddings is rooted in $H$, so in
a certain sense these graphs can be considered trivial. 
Due to Lemma~\ref{lem:3edgecut}, the smallest cubic graph with (at least) $8$ embeddings on the torus that cannot be obtained as a star product
must be cyclically 4-connected. We tested all cubic graphs with girth at least 4 on up to 30 vertices with $8$ embeddings on the torus and it turned out that
-- except for $H$ -- they all have a cyclic 3-edge-cut, so they
can all be obtained from $H$ by the star product.

\begin{description}
\item[(b)] What is the smallest cubic graph with at least $8$ embeddings on the torus that cannot be obtained as a repeated star product applied to the Heawood graph?
\end{description}

Note that it is also possible that such a graph does not exist. E.g. in \cite{num_embed_bound_fw4torus} the graph of the 4-by-4 grid on the torus played a special role for facewidth 4: all graphs without a unique embedding on the torus can be obtained by operations on this graph.

Though for genus 2 we do not have much data, the situation seems similar: the cubic graph with 64 embeddings in genus 2 is unique ($H^2=H\star H$). On 28 vertices there are
4 graphs with  64 embeddings in genus 2: three have girth 3 (so they must be $H^2\star K_4$) and the other has girth 6, but a cyclic 3-edge-cut, so it must be
$(H\star K_4)_v \star H$ with the vertex $v$ in the triangle of $H\star K_4$.

A question asking for a similar result to the main result in \cite{num_embed_bound}, but not fixing the genus of the embedding, but of the graph, is completely open:

\begin{description}
\item[(c)]  Is there an upper bound $f_a(g)$  for the number of polyhedral embeddings of a cubic graph with genus $g$?
\end{description}

Again by Theorem~\ref{thm:polyhedra} $f_a(0)=1$, but the fact that
already toroidal graphs can have polyhedral embeddings in the torus as
well as in an arbitrarily large genus, suggests that not restricting
the genus of the embedding might lead to arbitrarily many polyhedral
embeddings already for the torus. Nevertheless, we know no proof of
this and up to 28 vertices all
graphs $G$ with the largest number of polyhedral embeddings for a fixed value of $\gen(G)$ have all
embeddings in the same genus -- which is in fact $\gen(G)$.

\medskip

For the difference between the smallest genus that allows a polyhedral
embedding and the genus of the graph, we have a similar situation: for
the graphs constructed, the difference is only large for graphs that
have a large genus, while even toroidal graphs can have embeddings in arbitrarily large genus. This suggests the following question:

\begin{description}
\item[(d)] Is there an upper bound $f_p(g)$ on the minimum genus of a polyhedral embedding of a cubic graph $G$ with $\gen(G)=g$ that has a polyhedral embedding in some genus? 
\end{description}

For the plane we have $f_p(0)=0$, but already for $g=1$ the question is open.


\begin{thebibliography}{19}
\providecommand{\natexlab}[1]{#1}
\providecommand{\url}[1]{\texttt{#1}}
\expandafter\ifx\csname urlstyle\endcsname\relax
  \providecommand{\doi}[1]{doi: #1}\else
  \providecommand{\doi}{doi: \begingroup \urlstyle{rm}\Url}\fi

\bibitem[Brinkmann(1996)]{minibaum}
G.~Brinkmann.
\newblock Fast generation of cubic graphs.
\newblock \emph{Journal of Graph Theory}, 23\penalty0 (2):\penalty0 139--149,
  1996.

\bibitem[Brinkmann(2020)]{genuscomp}
G.~Brinkmann.
\newblock A practical algorithm for the computation of the genus.
\newblock submitted, arXiv identifier 2005.08243, 2020.

\bibitem[Brinkmann et~al.(2013{\natexlab{a}})Brinkmann, Goedgebeur,
  H{\"a}gglund, and Markstr{\"o}m]{snarkapp}
G.~Brinkmann, J.~Goedgebeur, J.~H{\"a}gglund, and K.~Markstr{\"o}m.
\newblock Generation and properties of snarks.
\newblock \emph{Journal of Combinatorial Theory B}, 4\penalty0 (3):\penalty0
  468--488, 2013{\natexlab{a}}.

\bibitem[Brinkmann et~al.(2013{\natexlab{b}})Brinkmann, Goedgebeur, M{\'e}lot,
  and Coolsaet]{HoG}
G.~Brinkmann, J.~Goedgebeur, H.~M{\'e}lot, and K.~Coolsaet.
\newblock House of graphs: a database of interesting graphs.
\newblock \emph{Discrete Applied Mathematics}, 161:\penalty0 311--314,
  2013{\natexlab{b}}.
\newblock http://hog.grinvin.org.

\bibitem[Van den Camp(2020)]{Heidi_masterthesis}
H.~{Van den Camp}.
\newblock The effect of local symmetry-preserving operations on the
  connectivity of embedded graphs.
\newblock Master's thesis, Ghent University, 2020.

\bibitem[Gross and Tucker(1987)]{top_graph_theory}
J.~Gross and T.~Tucker.
\newblock \emph{Topological Graph Theory}.
\newblock John Wiley and Sons, 1987.

\bibitem[Gr{\"u}nbaum(1969)]{gruenbaumconj}
B.~Gr{\"u}nbaum.
\newblock Conjecture~6.
\newblock In W.~Tutte, editor, \emph{Recent progress in Combinatorics}, page
  343. Academic Press, New York, 1969.
\newblock Proceedings of the Third Waterloo Conference on Combinatorics, 1968.

\bibitem[Kochol(2009)]{polembsnark}
M.~Kochol.
\newblock Polyhedral embeddings of snarks in orientable surfaces.
\newblock \emph{Proc. of the Am. Math. Soc.}, 137\penalty0 (5):\penalty0
  1613--1619, 2009.

\bibitem[Kotzig(1964)]{kotzig_mariage}
A.~Kotzig.
\newblock Hamilton graphs and hamilton circuits.
\newblock In \emph{Theory of Graphs and its Applications}, pages 63--82.
  Academic Press, New York, 1964.
\newblock Proceedings of the Symposium held in Smolenice in June 1963.

\bibitem[Malni{\v c} and Mohar(1992)]{Malnic}
A.~Malni{\v c} and B.~Mohar.
\newblock Generating locally cyclic triangulations of surfaces.
\newblock \emph{J. Combin. Theory Ser. B}, 56\penalty0 (2):\penalty0 147--164,
  1992.

\bibitem[Mohar(1997)]{Mohar1997}
B.~Mohar.
\newblock Face-width of embedded graphs.
\newblock \emph{Mathematica Slovaca}, 47\penalty0 (1):\penalty0 35--63, 1997.

\bibitem[Mohar and Robertson(2001)]{num_embed_bound}
B.~Mohar and N.~Robertson.
\newblock Flexibility of polyhedral embeddings of graphs in surfaces.
\newblock \emph{J. Combin. Theory Ser. B}, 83:\penalty0 38--57, 2001.

\bibitem[Mohar and Thomassen(2001)]{graphs_on_surfaces}
B.~Mohar and C.~Thomassen.
\newblock \emph{Graphs on Surfaces}.
\newblock Johns Hopkins University Press, 2001.

\bibitem[Mohar and Vodopivec(2006)]{movo2006}
B.~Mohar and A.~Vodopivec.
\newblock On polyhedral embeddings of cubic graphs.
\newblock \emph{Combinatorics Probability \& Computing}, 15\penalty0
  (6):\penalty0 877--893, 2006.

\bibitem[Negami(1999)]{Negami}
S.~Negami.
\newblock Diagonal flips of triangulations on surfaces, a survey.
\newblock In \emph{Proceedings of the 10th Workshop on Topological Graph Theory
  (Yokohama, 1998)}, volume~47 of \emph{Yokohama Math. J.}, pages 1--40, 1999.

\bibitem[Ringel and Youngs(1968)]{genus_complete_graph}
G.~Ringel and J.~Youngs.
\newblock Solution of the {H}eawood map-coloring problem.
\newblock \emph{Proceedings of the National Academy of Sciences}, 60\penalty0
  (2):\penalty0 438--445, 1968.

\bibitem[Robertson et~al.(2008)Robertson, Zha, and
  Zhao]{num_embed_bound_fw4torus}
N.~Robertson, X.~Zha, and Y.~Zhao.
\newblock On the flexibility of toroidal embeddings.
\newblock \emph{J. Combin. Theory Ser. B}, 98:\penalty0 43--61, 2008.

\bibitem[Sun(2021)]{facedistributions}
T.~Sun.
\newblock Face distributions of embeddings of complete graphs.
\newblock \emph{J. Graph Theory}, 97\penalty0 (2):\penalty0 281--304, 2021.
\newblock arXiv:1708.02092v3.

\bibitem[Thomassen(1990)]{Thomassen_90}
C.~Thomassen.
\newblock Embeddings of graphs with no short noncontractible cycle.
\newblock \emph{J. Combin. Theory, Ser. B}, 48:\penalty0 155--170, 1990.

\end{thebibliography}

\end{document}